\newtheorem{theorem}{Theorem}
\newtheorem{lemma}[theorem]{Lemma}
\newtheorem{conjecture}[theorem]{Conjecture}
\begin{document}

\onehalfspace

\title{On the Graceful Game}

\author{Luisa Frickes$^1$\and 
Simone Dantas$^1$\and
At\'ilio G.~Luiz$^2$}

\date{}

\maketitle

\begin{center}
{\small 
$^1$ Instituto de Matem\'{a}tica e Estat\'{i}stica, Universidade Federal Fluminense, Niter\'{o}i, Brazil\\
\texttt{frickesluisa@id.uff.br, sdantas@id.uff.br}\\[3mm]
$^2$ Universidade Federal do Cear\'{a}, Campus Quixad\'a, Quixad\'a, Brazil\\
\texttt{gomes.atilio@ufc.br}
}
\end{center}

\begin{abstract}
A graceful labeling of a graph $G$ with $m$ edges consists of labeling the vertices of $G$ with distinct integers from $0$ to $m$ such that, when each edge is assigned as induced label the absolute difference of the labels of its endpoints, all induced edge labels are distinct. Rosa established two well known conjectures: all trees are graceful (1966) and all triangular cacti are graceful (1988). In order to contribute to both conjectures we study graceful labelings in the context of graph games. The Graceful game was introduced by Tuza in 2017 as a two-players game on a connected graph in which the players Alice and Bob take turns labeling the vertices with distinct integers from 0 to $m$. Alice's goal is to gracefully label the graph as Bob's goal is to prevent it from happening. In this work, we study winning strategies for Alice and Bob in complete graphs, paths, cycles, complete bipartite graphs, caterpillars, prisms, wheels, helms, webs, gear graphs, hypercubes and some powers of paths.
\end{abstract}

{\small 

\begin{tabular}{lp{13cm}}
{\bf Keywords:} graceful labeling, graceful game, maker-breaker game.
\end{tabular}
}

\section{Introduction}

A \textit{simple graph} $G=(V(G),E(G))$ is an ordered pair where $V(G)$ is a nonempty finite set whose elements are called \textit{vertices} and $E(G)$ is a set of elements called edges, where an \emph{edge} $e\in E(G)$ is a unordered pair of different vertices of $V(G)$, called its \emph{endpoints}. We say that an edge $e$ \emph{connects} two vertices $u,v\in V(G)$ when $e=uv$, and we say $u$ and $v$ are \textit{adjacent} if they are connected by an edge $e$. Two adjacent vertices are also called \emph{neighbors}. As usual, $N(v)$ denotes the set of neighbors of $v\in V(G)$. An \emph{element} of a graph is a vertex or an edge of the graph.

Graph labeling is an area of graph theory that has been attaining a particular importance since the 1960's. The main concern in this area  consists in determining the feasibility of assigning labels to the elements of a graph satisfying certain conditions. Usually, the labels are elements of a set that supports some kind of mathematical operation as, for example, the set of nonnegative integers. 
However, the idea of assigning symbols other than numbers to the elements of a graph is not recent. For example, an old and very studied problem in graph theory is the \emph{vertex coloring problem}, which consists in determining the least number of colors needed to color the vertices of a given graph such that any two adjacent vertices receive distinct colors. Vertex colorings arose in connection with the well known Four Color Conjecture, which remained open for more than 150 years until its solution in 1976~\cite{Appel1977,Appel1977a}.

In the last decades, many contexts have emerged where it is required to label the vertices or the edges of a given graph with numbers. Most of these problems, such as harmonious labelings~\cite{Graham1980} and L(2,1)-labelings~\cite{Griggs1992}, arose naturally from modeling of optimization problems on networks. Formally, given a graph $G$ and a set $L \subset \mathbb{R}$, a \emph{labeling} of $G$ is a vertex labeling $f \colon V(G) \to L$ that induces an edge labeling $g\colon E(G) \to \mathbb{R}$ in the following way: $g(uv)$ is a function of $f(u)$ and $f(v)$, for all $uv \in E(G)$, and $g$ respects some specified restrictions. Depending on the function $g$ chosen, on the restrictions that $g$ is required to satisfy, or even on the chosen subset of labels $L$, many different types of graph labelings can be defined.

One of the oldest and most studied graph labelings is the \textit{graceful labeling}, so named by Golomb~\cite{Golomb1972} and initially introduced by A.~Rosa~\cite{Rosa1967} around 1966. A \textit{graceful labeling} of a graph $G$ with $m$ edges is an injective function $f\colon V(G) \to \{0,1,\ldots,m\}$ such that, when each edge $uv \in E(G)$ is assigned the (induced) label $g(uv) = |f(u)-f(v)|$, all induced edge labels are distinct. A graph $G$ that has a graceful labeling is called \emph{graceful}. Figure~\ref{pic:example} exhibits three graceful graphs.

\begin{figure}[!htb]
\centering
\begin{tikzpicture}[main_node/.style={circle,fill=white,draw,minimum size=.3em,inner sep=2pt]},scale=0.7]
    \node[main_node] (1) at (0,0) {\scriptsize $0$};
    \node[main_node] (2) at (3,0) {\scriptsize $3$};
    \node[main_node] (3) at (1.5,2.5) {\scriptsize $1$};
    \draw (1) -- (2) -- (3) -- (1);
\end{tikzpicture}
\hspace{.5in}
\begin{tikzpicture}[main_node/.style={circle,fill=white,draw,minimum size=.3em,inner sep=2pt]},scale=0.7]
    \node[main_node] (1) at (0,0) {\scriptsize $0$};
    \node[main_node] (2) at (2.5,0) {\scriptsize $1$};
    \node[main_node] (3) at (2.5,2.5) {\scriptsize $5$};
    \node[main_node] (4) at (0,2.5) {\scriptsize $2$};
    \draw (1) -- (2) -- (3) -- (4) -- (1);
    \draw (1) -- (3);
\end{tikzpicture}
\hspace{.5in}
\begin{tikzpicture}[main_node/.style={circle,fill=white,draw,minimum size=.3em,inner sep=2pt]},scale=0.7]
    \node[main_node] (1) at (0,0) {\scriptsize $1$};
    \node[main_node] (2) at (0,2.5) {\scriptsize $3$};
    \node[main_node] (3) at (2.5,1.25) {\scriptsize $0$};
    \node[main_node] (4) at (5,1.25) {\scriptsize $4$};
    \draw (1) -- (2) -- (3) -- (1);
    \draw (3) -- (4);
\end{tikzpicture}
\caption{Three graphs with graceful labelings.}
\label{pic:example}
\end{figure}
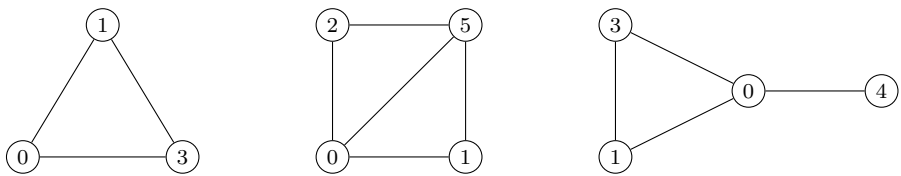

The most studied open problem related to graceful labelings is the \emph{Graceful Tree Conjecture}, which states that all trees are graceful. This conjecture was posed by Rosa~\cite{Rosa1967} in 1966 and, since then, this and many other labeling problems have been studied~\cite{Graham1980,Griggs1992,Rosa1967,Rosa1970}. Another conjecture was posed by Rosa~\cite{Rosa1988} in 1988 that all triangular cacti are graceful. For a comprehensive list of results on graceful labelings, the reader is referred to Gallian's dynamic survey~\cite{Gallian2018}.

From the vast literature of Graph Labeling (more than two thousand papers~\cite{Gallian2018}), it is notorious that labeling problems are usually studied from the perpective of determining whether a given graph has a required labeling or not. An alternative outlook is to analyze labeling problems from the point of view of combinatorial games. The study of combinatorial games is a classical area in both discrete mathematics and game theory~\cite{Berlekamp1982}. One of the main characteristics of these types of games is that there is absolutely no lucky involved, that is, all players have perfect information and involves no chance. In most combinatorial games, two players --- traditionally called Alice and Bob --- alternately select and label vertices or edges (typically one vertex or edge in each step) in a graph $G$ which is completely known for both players.

Despite the rich literature of graph labeling~\cite{Gallian2018}, only a few papers have been published on labeling games~\cite{Baudon2017,Boudreau2004,Chia2012,Giambrone2011,Hartnell2003,Tuza2017}. Three of them deal with magic labelings~\cite{Boudreau2004,Giambrone2011,Hartnell2003}, one of them considers the game version of L(d,1)-labelings~\cite{Chia2012}, another one considers the game version of neighbor-sum-distinguishing edge labelings~\cite{Baudon2017} and, in a recent survey, Z.~Tuza~\cite{Tuza2017} surveys the area and proposes new labeling games such as the graceful game studied in this work. While the number of articles published in labeling games has been scarce so far, in the related area of graph colorings, there is a track of research concerning the `game chromatic number' comprising more than fifty published papers (see Tuza and Zhu's survey~\cite{Tuza2015}.)

In this work, we investigate the graceful game on simple graphs, proposed by Tuza~\cite{Tuza2017} in order to contribute to the study of the Rosa's conjectures. Informally, the graceful game is a two-players game on a connected graph in which Alice and Bob take turns labeling the
vertices of a graph $G$ with distinct numbers from the set $\{0,1,\ldots,|E(G)|\}$. Alice's goal is to gracefully label the graph as Bob's goal is
to prevent it from happening. In this work, we study winning strategies for Alice and Bob in the following families of graphs: paths, cycles, complete bipartite graphs, complete graphs, caterpillars, hypercubes, helms, webs, gear graphs, prisms and some powers of paths. This paper is organized as follows. Section~\ref{sec:Graceful} presents some definitions and auxiliary results used in our proofs. Section~\ref{sec:results} presents our main results on the graceful game for some classic families of graphs. In Section~\ref{sec:conclusion}, we present our conclusions.

\section{The Graceful Game}\label{sec:Graceful}

The \emph{Graceful Game} is defined in the following way: Alice and Bob alternately assign an unused label $f(v)\in \{0,\ldots,m\}$ to a previously unlabeled vertex $v$ of a given simple graph $G = (V(G),E(G))$ with $m$ edges. We call a vertex of $G$ \emph{free} if it is not labeled yet. If both endpoints of an edge $uv \in E$ are already labeled, then the \emph{label} of the edge $uv$ is defined as $|f(u)-f(v)|$. A move (labeling) is said to be \emph{legal} if, after it, all edge labels are distinct. In the Graceful Game, Alice \emph{wins} if the whole graph $G$ is gracefully labeled, and Bob \emph{wins} if he can prevent this.

It is well known that not every graph is graceful; in fact, most graphs are not graceful~\cite{Golomb1972}. For non-graceful graphs, it is immediate that Bob is the winner and, therefore, the game is completely determined for such graphs. In this work, we investigate classes of graphs for which it is possible to obtain a graceful labeling. 

The next two lemmas show properties of graceful games that are used throughout this work. Before presenting the lemmas, an additional definition is needed. Given a graph $G$ with $m$ edges and with a graceful labeling $f$, the \emph{complementary labeling} of $f$ is the labeling $\overline{f}$ defined as $\overline{f}(v) = m - f(v)$ for all $v \in V(G)$. The complementary labeling of a graceful labeling of $G$ is also graceful.

\begin{lemma}\label{lemma:firstlab0} 
Let $G$ be a simple graph with $m$ edges. Alice can only use the label $0$ (resp.~$m$) to label a vertex $v \in V(G)$ if $v$ is adjacent to every remaining free vertex or $v$ is adjacent to a vertex already labeled by Bob with $m$ (resp.~$0$).
\end{lemma}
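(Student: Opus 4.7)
The plan rests on a simple arithmetic observation: in any graceful labeling of a graph with $m$ edges, the induced edge labels are $m$ distinct values drawn from $\{1, \ldots, m\}$, hence they are exactly $\{1, 2, \ldots, m\}$. In particular, the value $m$ must appear as an induced edge label, and since vertex labels lie in $\{0, 1, \ldots, m\}$, the only pair $\{a, b\}$ with $|a - b| = m$ is $\{0, m\}$. Consequently, in any graceful labeling of $G$ the vertices carrying labels $0$ and $m$ must be adjacent.

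To prove the lemma I would suppose Alice assigns label $0$ to a vertex $v$ in a play consistent with a win for her, and show one of the two alternatives must hold. Since labels are used injectively, $v$ is the unique vertex that will ever carry label $0$, so by the observation above the vertex that eventually carries label $m$ must lie in $N(v)$. I would then split on whether $m$ is already on the board. If $m$ has already been placed by Bob on some vertex $u$, then $u \in N(v)$ is forced, which is the second alternative. Otherwise $m$ is still available in the pool, and on a subsequent move Bob, playing adversarially, will aim to place $m$ on a free non-neighbor of $v$; to preclude this spoiler, Alice must already have ensured that every remaining free vertex is a neighbor of $v$, giving the first alternative.

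The ``resp.'' statement about Alice using $m$ on $v$ is immediate from the complementary-labeling remark recorded immediately before the lemma: replacing $f$ by $\overline{f}(w) = m - f(w)$ exchanges the roles of labels $0$ and $m$ while preserving gracefulness, so the previous argument applies verbatim after swapping ``$0$'' and ``$m$'' (and likewise swapping the roles of $0$ and $m$ in the reference to Bob's already-placed label).

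The delicate point to handle cleanly is the sub-case where $m$ is still unused: one would like to assert that, given a free non-neighbor $w$ of $v$, Bob really can legally assign $m$ to $w$ without creating a duplicate induced edge label. I would address this by reading the lemma strategically rather than tactically: outside of the two stated structural conditions, Alice has no way to \emph{guarantee} adjacency of the eventual $0$- and $m$-vertices, because she cannot control which free vertex Bob will pick for label $m$. This reading, which I believe is the intended one, sidesteps the need to exhibit a specific legal spoiling move and turns the statement into a clean necessary condition on Alice's winning strategies.
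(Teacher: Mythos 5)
Your proposal is correct and follows essentially the same route as the paper: the edge label $m$ can only arise from adjacent vertices labelled $0$ and $m$, Bob spoils by placing $m$ on a free non-neighbour of $v$, and the resp.\ case follows by complementary labeling. The ``delicate point'' you raise about the legality of Bob's spoiling move is one the paper simply does not address (and in every application of the lemma the move is made at the start of the game, when no edge labels yet exist, so it is trivially legal); your strategic reading is a reasonable way to cover it but is not needed to match the paper's argument.
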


\begin{proof} 
A graceful graph $G$ must have an edge with induced label $m$ and the only way to obtain it is by assigning labels $0$ and $m$ to two adjacent vertices. Thus, suppose Alice labels a vertex $v \in V(G)$ with $0$, without Bob having already labeled any vertex with $m$ and there is a free vertex not adjacent to $v$ in the graph. On Bob's next move, he assigns label $m$ to the free vertex that is not adjacent to $v$, making it impossible for Alice to gracefully label the graph. The case with $0$ and $m$ exchanged is analogous by the complementary labeling.
\end{proof}

Lemma~\ref{lemma:forceA} establishes properties where Alice is forced to use the label $0$ or the label $m$.

\begin{lemma}\label{lemma:forceA} 
Let $G$ be a simple graph with $m$ edges. If Bob assigns label $0$ (resp.~$m$) to a vertex $v \in V(G)$, such that $v$ has only one free neighbor or there are two free vertices in $G$ not adjacent to $v$, then Alice is forced to label a vertex adjacent to $v$ with $m$ (resp.~$0$). 
\end{lemma}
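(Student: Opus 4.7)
My plan is to prove the contrapositive: if Alice does not, on her very next move, label a neighbor of $v$ with $m$, then Bob has a reply that forever rules out the induced edge label $m$, and hence wins. The starting observation, already used in the proof of Lemma~\ref{lemma:firstlab0}, is that the induced edge label $m$ can only come from an edge whose endpoints carry the labels $0$ and $m$. Since Bob has just assigned $0$ to $v$, the label $m$ must eventually end up on some vertex of $N(v)$; otherwise no edge receives induced label $m$ and the final labeling is not graceful.

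Assume first that $v$ has a unique free neighbor $w$. Suppose Alice's next move is not ``label $w$ with $m$''. Then it belongs to one of three categories: (i) she labels $w$ with some $k \neq m$, exhausting $v$'s last free neighbor; (ii) she labels some non-neighbor of $v$ with $m$, placing $m$ outside $N(v)$; or (iii) she labels some non-neighbor of $v$ with some $k \neq m$. In (i) and (ii), no future move can put $m$ on a neighbor of $v$, so Bob has already won. In (iii), Bob replies by labeling $w$ with any legal label different from $m$, which reduces the position to case (i).

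Now assume the second hypothesis, so that there are two free vertices $x, y$ of $G$ not adjacent to $v$. Suppose Alice's next move does not place $m$ on a neighbor of $v$. If Alice uses label $m$ at all, she must place it on a non-neighbor of $v$, and Bob has already won. Otherwise label $m$ is still available after her move; since she labels at most one vertex, at least one of $x, y$ --- call it $z$ --- is still free, and Bob assigns $m$ to $z$, which is a non-neighbor of $v$. The symmetric statement, in which Bob labels $v$ with $m$ and Alice is forced to reply with $0$ on a neighbor of $v$, follows from the same argument applied to the complementary labeling $\overline{f}$ defined just before Lemma~\ref{lemma:firstlab0}.

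The main point requiring care is the legality of Bob's blocking reply --- labeling $w$ with a non-$m$ value in case (iii) of the first hypothesis, or labeling $z$ with $m$ in the second. At the moment Bob plays, only the edges joining the target vertex to its already-labeled neighbors contribute new induced labels, so only finitely many values would create a conflict; in the graph families considered in the subsequent sections, the existence of a legal blocking choice for Bob is clear from the local structure of the graph, so this technicality does not obstruct the argument.
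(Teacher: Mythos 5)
Your proof is correct and follows essentially the same strategy as the paper's: in the unique-free-neighbor case Bob blocks by giving that neighbor a label other than $m$, and in the two-free-non-neighbors case Bob places $m$ on a still-free non-neighbor of $v$, with the $m$/$0$ version handled by complementary labeling. Your version is merely more explicit about the trivial subcases (Alice wasting $m$ elsewhere) and about the legality of Bob's blocking move, which the paper leaves implicit.
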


\begin{proof}  
First, suppose Bob labels $v \in V(G)$ with 0 (resp. $m$) and $v$ has exactly one free neighbor. If Alice does not assign $m$ (resp. 0) to the unique $v$'s neighbor, then Bob can label it with a label $j\neq m$, $j \in \{1,2,\ldots, m-1\}$. Similarly for the case where there are two free vertices in $G$ not adjacent to $v$, if Alice chooses to use $j\neq m$ on any vertex, then Bob assigns $m$ (resp. 0) to a vertex not adjacent to $v$. In both cases, Bob wins the game.
\end{proof}

\section{Main results}\label{sec:results}

In this section, we present our main results. A \textit{path graph} $P_n$ is a connected graph on $n$ vertices whose vertices can be arranged in a linear sequence ($v_0,v_1,\ldots,v_{n-1}$) in such a way that two vertices are adjacent if and only if they are consecutive in the linear sequence. Rosa~\cite{Rosa1967} proved that all paths are graceful. In Theorem~\ref{thm:labpath}, the graceful game is characterized for all paths.

\begin{theorem}\label{thm:labpath} 
Bob has a winning strategy for any $P_n$, $n\geq 4$. For $n=3$ the winner is the player who starts the game, and Alice has a winning strategy for $n \in \{1,2\}$.
\end{theorem}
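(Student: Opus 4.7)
The plan is to dispose quickly of the small cases ($n \le 3$) by direct play, then give Bob an explicit leaf-based forcing strategy for $n \ge 4$ that exploits the rigidity of graceful labelings of paths once the labels $0$ and $m = n-1$ sit on adjacent vertices.

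For $n=1$ Alice simply labels the only vertex. For $n=2$ she plays $0$ on one vertex (admissible by Lemma~\ref{lemma:firstlab0} since the other vertex is its only neighbor) and Bob is forced to play $1$. For $n=3$: if Alice starts she plays $v_1=0$ on the middle vertex --- legal by Lemma~\ref{lemma:firstlab0} because $v_1$ is adjacent to both remaining free vertices --- and any legal completion yields edge labels $\{1,2\}$. If Bob starts he plays $v_1=1$: the remaining labels $\{0,2\}$ both sit at distance $1$ from $v_1$, so Bob has no legal third move after Alice plays, and the path is not gracefully labeled.

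Now let $n\ge 4$ with Alice playing first. Since no vertex of $P_n$ is adjacent to every other vertex, Lemma~\ref{lemma:firstlab0} forces her first move to be $v_a = l$ with $l \in \{1,\ldots,m-1\}$. Bob plays label $0$ or $m$ on a leaf, chosen as follows. If $a \in \{1,n-2\}$, Bob plays $m$ on the leaf adjacent to $v_a$: the incident edge has label $m - l < m$ and no other edge of $P_n$ is incident to a leaf, so the label $m$ can never appear on any edge and Bob wins automatically. Otherwise Bob plays on a leaf whose unique neighbor is still free, and by Lemma~\ref{lemma:forceA} Alice is forced to label that neighbor with the complementary value ($m$ or $0$).

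Once $\{0,m\}$ occupy two adjacent vertices at an end of the path, an inductive argument forces the rest of the labeling: the edge labeled $m-1$ must come from an adjacent pair $\{0,m-1\}$ (the alternative $\{1,m\}$ is impossible because the neighbor of $m$ already carries $0$), so the next vertex is forced to $m-1$; then the edge $m-2$ can only come from $\{1,m-1\}$, forcing the next vertex to $1$; iterating, the unique completion is the Rosa zigzag $m,0,m-1,1,m-2,2,\ldots$ Alice's pre-placed label $l$ at position $a$ must agree with the value this zigzag assigns to $v_a$, otherwise no graceful completion exists and Bob wins. Bob has up to four possible leaf-strategies (two leaves by two labels), producing four forced zigzags; a direct computation with the explicit formulas $v_{2k}=m-k$, $v_{2k+1}=k$ shows that at most two of these zigzags can agree with Alice's $l$ at $v_a$, so Bob always has at least one non-matching strategy available. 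The sole corner case --- $a \in \{0,n-1\}$, $n$ odd, and $l = m/2$, when both of Bob's available leaf-strategies would produce zigzags consistent with Alice's label --- is handled by Bob's third move placing an intermediate label (such as $m-1$) on a vertex where the zigzag forbids it, manufacturing two edges with the same label. The main obstacle is justifying the rigidity of the Rosa zigzag as the unique graceful completion after $\{0,m\}$ is placed, together with verifying the disruptive third move in this corner case; both follow from the strict alternation between ``high'' and ``low'' labels in the zigzag, but the finite case analysis forms the technical core of the proof.
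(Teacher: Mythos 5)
Your treatment of $n\le 3$ is fine, and your strategy for $n\ge 4$ with Alice moving first is a genuinely different route from the paper's: the paper never invokes uniqueness of the completion. There, after $0$ and $m$ are forced onto adjacent vertices, Bob only observes that the edge label $m-1$ can arise solely from an adjacent pair $\{0,m-1\}$ or $\{1,m\}$ and kills both with one move (placing $1$ on a vertex not adjacent to the $m$-vertex, or noting that $i=1$ already sits in the wrong place). That is shorter and avoids your corner case entirely. Your rigidity argument buys more --- it pins down the whole forced labeling --- but it rests on a claim you do not prove: that once $0$ sits on a leaf and $m$ on its neighbour, the zigzag is the \emph{unique} graceful completion. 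The inductive step needs the observation that, at stage $t$, every pair $\{j,\,j+m-t\}$ realizing the next edge label $m-t$ contains an already-used label sitting on a vertex with at most one free neighbour; in particular one must rule out a pair of two \emph{fresh} labels being placed on adjacent free vertices elsewhere on the path (a parity count on the used low and high labels does this). You explicitly defer this as ``the technical core,'' so as written the proof is incomplete at its central step.

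The more serious gap is coverage: the theorem asserts Bob wins on $P_n$, $n\ge 4$, \emph{regardless of who starts}, and your argument for $n\ge 4$ only treats Alice playing first. The Bob-first case is not implied by the second-player case --- there is no passing move, and the $P_3$ case itself shows the outcome can depend on who starts. The paper handles it directly: Bob opens with $0$ on the leaf $v_0$, Lemma~\ref{lemma:forceA} forces Alice to put $m$ on $v_1$, and Bob then labels $v_2$ with anything other than $1$, making the edge label $m-1$ unattainable. Your machinery would also close it (Bob opens on a leaf and then legally deviates from the forced zigzag on his second move), but some version must be written down. Two smaller imprecisions: ``no other edge of $P_n$ is incident to a leaf'' is false as stated (the point is that the only edge incident to \emph{that} leaf already carries the label $m-l<m$, and $0$ can never become adjacent to $m$); and in the corner case Bob's deviating move does not itself ``manufacture two equal edge labels'' --- it must be legal --- rather it destroys the unique graceful completion.
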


\begin{proof} 
For the cases $P_1$ and $P_2$, Alice always wins given the fact there is only one way of labeling $P_1$ and, by $P_2$'s symmetry and by complementary labelling, there is only one way of gracefully labeling $P_2$. For $P_3$, if Bob starts, then he labels $v_1$ with $1$ and, no matter how Alice decides to label her next vertex, she never gets the edge label $2$, therefore losing the game. In contrast, if Alice starts, she labels $v_0$ with $1$. Now, independently of Bob's choice, the graph is graceful.

Next, consider $P_n$ with $n\geq 4$. When Bob is the first player, his strategy is to assign $0$  (resp. m) to $v_0$. By Lemma~\ref{lemma:forceA}, Alice's only option is to label $v_1$ with $m$. Then, Bob must label $v_2$ with any label but $1$. Now, regardless of the next moves, this graph can never be graceful since there are no possibilities of getting edge label $m-1$.

When Alice is the first player, by Lemma~\ref{lemma:firstlab0}, Alice must choose a label $i \in \{1, \ldots, m-1\}$. For $P_4$, we refer to Figure~\ref{pic:gracefulP4} for its only two graceful labelings. Wlog, if Alice labels $v_0$ (resp. $v_1$) with $i$, then Bob assigns $0$ to $v_1$ (resp. $v_0$) to win.
For $P_n$, $n\geq 5$, if she assigns $i$ to $v_j$ then Bob labels $v\in N(v_j)$ with $0$. If there does not exists $u\in N(v)$ then she loses the game. If there exists $u\in N(v)$ then Alice is forced to label $u$ with $m$.
If $i\not =1$ and $N(u) \not = \emptyset$, Bob assigns $1$ to a vertex $x \not \in N(u)$, then preventing the edge label $m-1$. Otherwise, Alice also loses the game. 
\end{proof}

\begin{figure}[!htb]
\centering
\begin{tikzpicture}[main_node/.style={circle,fill=white,draw,minimum size=.3em,inner sep=2pt]},scale=0.8]
    \node[main_node] (1) at (0,0) {\scriptsize $2$};
    \node[main_node] (2) at (1.5,0) {\scriptsize $1$};
    \node[main_node] (3) at (3,0) {\scriptsize $3$};
    \node[main_node] (4) at (4.5,0) {\scriptsize $0$};
    \draw (1) -- (2) -- (3) -- (4);
\end{tikzpicture}
\hspace{1in}
\begin{tikzpicture}[main_node/.style={circle,fill=white,draw,minimum size=.3em,inner sep=2pt]},scale=0.8]
    \node[main_node] (1) at (0,0) {\scriptsize $1$};
    \node[main_node] (2) at (1.5,0) {\scriptsize $2$};
    \node[main_node] (3) at (3,0) {\scriptsize $0$};
    \node[main_node] (4) at (4.5,0) {\scriptsize $3$};
    \draw (1) -- (2) -- (3) -- (4);
\end{tikzpicture}
\caption{Graceful labelings of $P_4$.}
\label{pic:gracefulP4}
\end{figure}
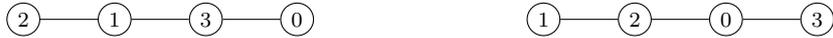

A \textit{complete graph} $K_n$ is a simple graph in which every pair of distinct vertices is connected by one edge. Golomb~\cite{Golomb1972} proved that a complete graph $K_n$ is graceful if and only if $n\leq 4$. The cases where $n = 1$ or $n = 2$ are trivial and resemble that of $P_1$ and $P_2$.

\begin{theorem}\label{thm:completegrp} 
Alice wins on $K_3$ and Bob on $K_4$, no matter who starts.
\end{theorem}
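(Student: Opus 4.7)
The plan is to handle the two claims separately, in each case first classifying the possible graceful labelings (tractable since these graphs are tiny) and then translating the classification into a winning strategy.

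For $K_3$ (so $m=3$), I would first enumerate the graceful labelings. Since $K_3$ is vertex-transitive, I only need to determine which $3$-subsets $S \subseteq \{0,1,2,3\}$ produce the difference set $\{1,2,3\}$. A direct check of the four candidates shows that exactly $S=\{0,1,3\}$ and its complement $S=\{0,2,3\}$ work. With this classification in hand, I exhibit Alice's strategies. If Alice starts, she plays $0$ on any vertex (legal by Lemma~\ref{lemma:firstlab0}, since every other vertex of $K_3$ is adjacent to it); for each of Bob's possible responses $1$, $2$, or $3$, Alice then has a completion into $\{0,1,3\}$ or $\{0,2,3\}$. If Bob starts, I split into four sub-cases on his label: if Bob plays $0$ or $3$, Alice plays $3$ or $0$ and \emph{both} of Bob's remaining legal moves yield a graceful labeling; if Bob plays $1$ or $2$, Alice plays $3$ or $0$ respectively, and the legality requirement (distinct edge labels) forces Bob's final label to be the unique one compatible with a graceful completion.

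For $K_4$ (so $m=6$), the key ingredient is the observation that since the edge labels must exhaust $\{1,\ldots,6\}$, in particular the difference $6$ must be realised, forcing both $0$ and $6$ into the vertex-label set. Writing the two remaining vertex labels as $a<b$ with $a,b\in\{1,2,3,4,5\}$, the six pairwise differences are $\{6,\,a,\,b,\,6-a,\,6-b,\,b-a\}$, and a short case split on $a$ shows that this equals $\{1,\ldots,6\}$ only for $(a,b)\in\{(1,4),(2,5)\}$. Hence every graceful labeling of $K_4$ uses the vertex-label set $\{0,1,4,6\}$ or $\{0,2,5,6\}$, and in particular \emph{never} uses the label $3$. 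Bob's winning strategy is now immediate: on his very first move (whether he plays first or second overall) he assigns the label $3$ to any free vertex. This move is always legal, because at most one edge is newly completed and its label is nonzero; and since $3$ then belongs to the final vertex-label set, the completed labeling cannot be graceful.

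The main obstacle is the bookkeeping in the $K_3$ case when Bob starts, since Alice's reply has to be tailored to Bob's opening label; but with only three vertices and four available labels the analysis is exhaustive and short. The classification step for $K_4$ is slightly more involved, yet yields a strikingly simple strategy for Bob: he needs to use only the single label $3$.
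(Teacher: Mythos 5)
Your proof is correct and follows essentially the same route as the paper: for $K_3$ both arguments classify the graceful vertex-label sets as $\{0,1,3\}$ and $\{0,2,3\}$ and have Alice open with $0$ (or answer Bob's opening so that the legality rule forces him into a graceful completion), and for $K_4$ both rest on the observation that no graceful labeling uses the label $3$, which Bob therefore ensures appears among the vertex labels. The only difference is cosmetic: you derive the $K_4$ classification by a short hand computation from the forced pair $\{0,6\}$, whereas the paper cites Golomb and exhibits the two labelings in a figure.
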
 

\begin{proof} First, consider the complete graph $K_3$. Note that, in order to $K_3$ be graceful, two of its vertices must be assigned labels $0$ and $3$. So, when Alice is the first player, she can start by labeling the first vertex with label $0$. Now, no matter the vertex label Bob chooses, it is always possible to graceful label the graph with vertex labels $\{0, 1, 3\}$ or $\{0, 2, 3\}$. When Bob is the first player, his strategy is to label the graph using the set of vertex labels $\{0, 1, 2\}$ or $\{1, 2, 3\}$. Thus, he cannot label the first vertex with $0$ or $3$ because if he does, Alice's next move would be to label the second vertex with $3$, and the graph would turn out to be graceful. Based on the previous observations, we can assume that Bob labels the first vertex with $1$ or $2$. Consider that Bob starts assigning label $1$. Now, Alice knows she has to label the second vertex with $0$ or $3$. Assume that Alice labels the second vertex with $0$, creating the edge labeled $1$ (the case where Alice assigns label $3$ to the second vertex is complementary). By the rules of the Graceful game, to label the last vertex, Bob cannot use label $2$ since a repeated edge labeled $1=|2-1|$ would be induced, what constitutes an illegal move. This way, his only option is to label the last vertex with label $3$, therefore losing the game. 
The case where Bob chooses to label the first vertex with $2$ is analogous, by complementary labeling.

It is known~\cite{Golomb1972} and it can be verified by inspection that $K_4$ has only two graceful labelings (see Figure~\ref{fig:labK4}). From this fact, it can be deducted that no graceful labeling of $K_4$ assigns label $3$ to its vertices. Hence, when Alice is the first player, no matter what label she chooses for the first vertex, Bob can assign label $3$ for the second vertex. For the case when Bob starts the game, he can use $3$ to label the first vertex. In both cases, Bob wins the game.
\end{proof}

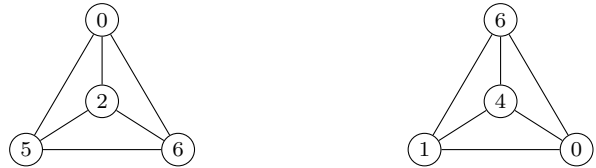
\begin{figure}[!htb]
\centering
\begin{tikzpicture}[main_node/.style={circle,fill=white,draw,minimum size=.3em,inner sep=2pt]},scale=0.4]
    \node[main_node,label=below:{}] (1) at (0,0) {\scriptsize $5$};
    \node[main_node,label=below:{}] (2) at (5,0) {\scriptsize $6$};
    \node[main_node,label=above:{}] (3) at (2.5,4.3) {\scriptsize $0$};
    \node[main_node,label=below:{}] (4) at (2.5,1.6) {\scriptsize $2$};
    \draw (1) -- (2) -- (3) -- (1);
    \draw (1) -- (4);
    \draw (2) -- (4);
    \draw (3) -- (4);
\end{tikzpicture}
\hspace{1in}
\begin{tikzpicture}[main_node/.style={circle,fill=white,draw,minimum size=.3em,inner sep=2pt]},scale=0.4]
    \node[main_node,label=below:{}] (1) at (0,0) {\scriptsize $1$};
    \node[main_node,label=below:{}] (2) at (5,0) {\scriptsize $0$};
    \node[main_node,label=above:{}] (3) at (2.5,4.3) {\scriptsize $6$};
    \node[main_node,label=below:{}] (4) at (2.5,1.6) {\scriptsize $4$};
    \draw (1) -- (2) -- (3) -- (1);
    \draw (1) -- (4);
    \draw (2) -- (4);
    \draw (3) -- (4);
\end{tikzpicture}
\caption{Two graceful labelings of $K_4$. One is the complementary labeling of the other.}
\label{fig:labK4}
\end{figure}

The next class of graphs considered in this work are the cycles. A \emph{cycle graph} $C_n$, with $n\geq 3$ vertices, is a connected simple graph such that all of its vertices can be arranged in a cyclic sequence $(v_0,v_1,\ldots,v_{n-1})$ such that two vertices are adjacent if and only if they are consecutive in the sequence. Rosa~\cite{Rosa1967} proved that the cycle graph $C_n$ is graceful if  and only if $n \equiv 0,3\pmod{4}$. Therefore, it is immediate that Bob is the winner when $n \not\equiv 0,3\pmod{4}$.

\begin{theorem}\label{thm:cyclegrp} 
Bob has a winning strategy for $C_n$, $n \geq 4$, and Alice wins on $C_3$.
\end{theorem}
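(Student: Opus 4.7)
Since $C_3 \cong K_3$, the statement for $C_3$ follows immediately from Theorem~\ref{thm:completegrp}. For $n \geq 4$ with $n \not\equiv 0, 3 \pmod{4}$, Rosa showed that $C_n$ is not graceful, so Bob wins trivially. The substantive task is therefore to exhibit a winning strategy for Bob when $n \geq 4$ and $n \equiv 0, 3 \pmod{4}$.

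My plan for that case is to have Bob target the edge label $n-1$, which in $\{0, 1, \ldots, n\}$ can only be induced by the label pairs $(0, n-1)$ or $(1, n)$ appearing on adjacent vertices. Bob's uniform idea is to place the labels $0$, $1$, and $n$ so that neither pair can be adjacent. When Bob is the first player (with $n \geq 5$), he plays $v_0 = 0$; by Lemma~\ref{lemma:forceA} (applicable since $v_0$ has $n - 3 \geq 2$ free non-neighbors) Alice is forced to label a neighbor of $v_0$ with $n$, and by the reflection symmetry of $C_n$ about $v_0$ one may assume $v_1 = n$. Bob then plays $v_{n-1} = 1$. A direct check then shows that no remaining move can produce edge label $n-1$: the neighbors of $v_0$ are already occupied by $n$ and $1$; the only free neighbor of $v_1$ is $v_2$, which cannot take label $1$ (already used); the only free neighbor of $v_{n-1}$ is $v_{n-2}$, which cannot take label $n$ (already used); and within the middle segment $v_2, \ldots, v_{n-2}$ the remaining labels $\{2, \ldots, n-1\}$ differ by at most $n-3 < n-1$. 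When Alice is the first player, by Lemma~\ref{lemma:firstlab0} she must play some label $i \in \{1, \ldots, n-1\}$, which by cyclic symmetry I take to be on $v_0$. Bob responds with $v_2 = 0$; for $n \geq 6$ Lemma~\ref{lemma:forceA} again forces Alice to set $v_1 = n$ or $v_3 = n$. In each sub-case Bob's third move simultaneously uses up one of the two critical labels in $\{1, n-1\}$ and occupies the unique free vertex that could otherwise have carried the other, blocking both sources of edge label $n-1$. The one wrinkle is the special case $i = 1$, where the edge $v_0 v_1 = n-1$ is already created; Bob then shifts his target to edge label $n-2$, whose two sources (the vertex adjacent to $v_2 = 0$ carrying label $n-2$, and the vertex adjacent to $v_0 = 1$ carrying label $n-1$) can both be killed by the single move $v_{n-1} = n-2$, with an analogous move in the $v_3 = n$ sub-case.

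The main obstacle is the boundary case $n = 4$, where Lemma~\ref{lemma:forceA} does not directly apply because the relevant ``opposite'' vertex has only one free non-neighbor, giving Alice room to spoil the ``opposite-vertex'' plan. I would dispatch $C_4$ by finite case analysis: when Bob starts he plays $v_0 = 1$ (symmetrically $v_0 = 3$), and for each of Alice's candidate responses on the three remaining vertices one checks that Bob has a reply leaving every completion either illegal or impossible to make graceful; when Alice starts with $v_0 = i \in \{1, 2, 3\}$, Bob plays $v_2$ with a label from $\{1, 3\} \setminus \{i\}$, and a short check confirms that no graceful completion remains. Combining the three pieces --- the trivial non-graceful cases, the uniform ``block edge label $n-1$'' strategy for $n \geq 5$, and the ad hoc handling of $n = 4$ --- yields the theorem.
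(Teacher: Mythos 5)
Your proposal is correct and follows the same core strategy as the paper: reduce $C_3$ to $K_3$, dismiss $n\not\equiv 0,3\pmod 4$ as non-graceful, and for the remaining cycles have Bob kill the edge label $n-1$ by controlling the labels $0$, $1$, $n$ via Lemmas~\ref{lemma:firstlab0} and~\ref{lemma:forceA}, with $C_4$ handled by inspecting its two graceful labelings. The tactical differences are minor but real. In the Alice-first case the paper places Bob's $0$ at distance three from Alice's vertex when $i\neq 1$ and at distance one when $i=1$, so that a single follow-up with label $1$ always suffices; you place the $0$ at distance two uniformly and compensate in the $i=1$ case by retargeting the edge label $n-2$, which works but forces you to track one extra sub-case (and your phrase ``uses up one of the two critical labels and occupies the unique free vertex'' glosses over the legality check of Bob's third move when $i\in\{2,n-1\}$ --- in the $v_3=n$ branch Bob must choose $v_4=n-1$ rather than $v_1=1$ to avoid a repeated edge label, and when $i=n-1$ one of the two sources is already dead; these are easily patched but should be spelled out). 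The one place where you go beyond the paper is Bob-first on $C_4$: the paper's $C_4$ paragraph treats only Alice moving first and its general Bob-first strategy is stated for $n>4$, and indeed the naive opening $v_0=0$ fails on $C_4$ (Alice can answer $v_2=2$ and still reach $(0,4,2,3)$ or $(0,3,2,4)$); your opening $v_0=1$, exploiting that every graceful labeling of $C_4$ containing label $1$ forces $0$ on the opposite vertex, closes this case correctly.
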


\begin{proof} 
Since $C_3 \cong K_3$, the result for $C_3$ follows from Theorem~\ref{thm:completegrp}. Next, consider $C_4$. According to Lemma~\ref{lemma:firstlab0}, when Alice is the first player, her only options are to start labeling the vertices with $1$, $2$ or $3$. Moreover, it can be verified by inspection that $C_4$ has only two distinct graceful labelings (see Figure~\ref{fig:labC4}), which are complementary to each other.

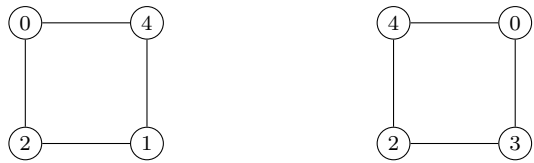
\begin{figure}[!htb]
\centering
\begin{tikzpicture}[main_node/.style={circle,fill=white,draw,minimum size=.3em,inner sep=2pt]},scale=0.8]
    \node[main_node,label=below:{}] (1) at (0,0) {\scriptsize $2$};
    \node[main_node,label=below:{}] (2) at (2,0) {\scriptsize $1$};
    \node[main_node,label=above:{}] (3) at (2,2) {\scriptsize $4$};
    \node[main_node,label=above:{}] (4) at (0,2) {\scriptsize $0$};
    \draw (1) -- (2) -- (3) -- (4) -- (1);
\end{tikzpicture}
\hspace{1in}
\begin{tikzpicture}[main_node/.style={circle,fill=white,draw,minimum size=.3em,inner sep=2pt]},scale=0.8]
    \node[main_node,label=below:{}] (1) at (0,0) {\scriptsize $2$};
    \node[main_node,label=below:{}] (2) at (2,0) {\scriptsize $3$};
    \node[main_node,label=above:{}] (3) at (2,2) {\scriptsize $0$};
    \node[main_node,label=above:{}] (4) at (0,2) {\scriptsize $4$};
    \draw (1) -- (2) -- (3) -- (4) -- (1);
\end{tikzpicture}
\caption{The graceful labelings of $C_4$. One is the complementary labeling of the other.}
\label{fig:labC4}
\end{figure}

Thus, no graceful labeling of $C_4$ has a vertex labeled $1$ and another labeled $3$. Hence, if Alice starts with $1$ or $3$, it is easy for Bob to win the game since it suffices to label a vertex with $3$ or $1$, respectively. It is also not harder for him to guarantee his victory in case Alice chooses to label a vertex $v$ with $2$ on the first move. Indeed, Bob labels a vertex $u \not \in N(v)$ with $1$ or $3$, making it impossible to obtain a graceful labeling for $C_4$.

Generalizing to $C_n$, $n>4$ and $n\equiv0,3$ (mod 4),  Bob again can exhaust Alice's chances of getting the edge label $n-1$. Remember that the only way to get the edge label $n-1$ is to have two adjacent vertices labeled with $n$ and $1$ or $0$ and $n-1$. Starting by the case where Bob is the first player, his strategy is to label a generic vertex $v_j \in V(C_n)$ with label $0$, then Alice's only option on her next move is to label $v_{j-1}$ or $v_{j+1}$ with $n$, getting the edge label $n$. Without loss of generality, suppose she chooses to label $v_{j+1}$. Now, Bob labels $v_{j+2}$ with $n-1$ or $v_{j-1}$ with $1$. In both cases he gets the edge label $1$ and makes it impossible for Alice to get the edge label $n-1$.

Now, if Alice starts the game, suppose she labels an arbitrary vertex $v_j \in V(C_n)$ with an arbitrary label $i \in \{1,\ldots,n-1\}$. If $i\not = 1$ then Bob labels $v_{j+3}$ with $0$, forcing Alice to label  $v_{j+4}$ (resp. $v_{j+2}$) with label $n$ on her next move. Thus, Bob labels $v_{j+2}$ (resp. $v_{j+4}$) with $1$ and wins the game. If $i = 1$ then Bob labels $v_{j+1}$ with $0$, forcing Alice to label $v_{j+2}$  with label $n$ on her next move. Bob wins the game again. 
\end{proof}

Next, we analyze the graceful game for complete bipartite graphs. A \emph{bipartite graph} is a graph $G=(V(G),E(G))$ such that there exists a partition $P=(X,Y)$ of $V(G)$ such that every edge of $E(G)$ connects a vertex in $X$ to a vertex in $Y$. A \emph{complete bipartite graph} $K_{p,q}$ is a simple bipartite graph in which each vertex of $X$ is joined to every vertex of $Y$, with $p = |X|$ and $q = |Y|$. Note that $|V(K_{p,q})|=p+q$ and $|E(K_{p,q})|=pq$.

\begin{theorem}\label{thm:compbipartide} 
Bob has a winning strategy for all $K_{p,q}$, $p,q\geq 2$. Alice wins the Graceful game in any star $K_{1,q}$ if she is the first player.
\end{theorem}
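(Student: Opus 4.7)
For $K_{1,q}$ with Alice going first, the plan is immediate: Alice places label $0$ on the center vertex, which is legal by Lemma~\ref{lemma:firstlab0} because the center is adjacent to every free vertex. After this move, every leaf $v$ produces the induced edge label $f(v)$, and since the remaining labels $1,\ldots,q$ are all distinct, the resulting labeling is automatically graceful regardless of Bob's subsequent moves.

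For $K_{p,q}$ with $p,q\ge 2$, the plan exploits the fact that in every graceful labeling of a bipartite graph the vertices labeled $0$ and $m$ must be adjacent, hence lie in different parts. Bob begins by playing $0$ on a vertex $x_1$ of (WLOG) the smaller part $X$. By a variant of Lemma~\ref{lemma:forceA}, Alice is forced to reply with $m$ on some neighbor $y_1\in Y$: any other choice either squanders $m$ inside $X$ (immediately killing the edge label $m$) or else allows Bob to do so on his next turn, as $|X|\ge 2$. For $p=2$, a direct verification of Alice's remaining alternatives takes the place of Lemma~\ref{lemma:forceA}.

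Bob's third move then targets the edge label $m-1$. When $p=2$, Bob plays $m-1$ on the second $X$-vertex $x_2$; the only pairs whose labels differ by $m-1$ are $\{0,m-1\}$ (both now inside $X$, hence non-adjacent) and $\{1,m\}$ (which would require $1\in X$, but $X$ is full), so no edge can ever realize label $m-1$ and Bob wins immediately. When $p\ge 3$, Bob plays $m-1$ on a second $Y$-vertex $y_2$, realizing the edge label $m-1$ at $x_1y_2$; this makes any later placement of label $1$ on a vertex of $X\setminus\{x_1\}$ illegal, since it would duplicate the edge label $m-1$ through the edge to $y_1$. A case analysis over the graceful labelings of $K_{p,q}$ compatible with the configuration $x_1=0$, $y_1=m$, $y_2=m-1$ and the ban on $1\in X\setminus\{x_1\}$ then shows that Bob can always find a legal disruptive move on his remaining turns, destroying every such template. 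When Alice starts, Lemma~\ref{lemma:firstlab0} restricts her opening to some label $i\in\{1,\ldots,m-1\}$ on a vertex $v$; Bob plays $0$ on a vertex of $v$'s own part and proceeds analogously, with a short direct check handling the subcase in which $v$'s part has only two vertices.

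The main technical obstacle is the final case analysis, which requires identifying the (sparse) family of graceful labelings of $K_{p,q}$ compatible with the forced early moves and verifying that Bob can always disrupt every member of this family; this relies on the rigid combinatorial structure of graceful labelings of complete bipartite graphs. The boundary case $p=q=2$ is $K_{2,2}\cong C_4$, already handled by Theorem~\ref{thm:cyclegrp}.
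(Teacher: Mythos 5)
Your $K_{1,q}$ argument and your $p=2$, Bob-first argument are both correct (the latter is a nice observation: with $m-1$ parked in the full part $X$ alongside $0$, both realizations $\{0,m-1\}$ and $\{1,m\}$ of edge label $m-1$ are dead). But the heart of the theorem --- $K_{p,q}$ with $p\ge 3$, and the entire Alice-first case --- is not actually proved. For $p\ge3$ you have Bob play $m-1$ on a second $Y$-vertex, which \emph{hands Alice the edge label $m-1$}, and then you defer everything to ``a case analysis over the graceful labelings of $K_{p,q}$ compatible with the configuration \ldots shows that Bob can always find a legal disruptive move.'' That deferred analysis is precisely the content of the theorem; there is no usable classification of graceful labelings of $K_{p,q}$ to lean on, and nothing in your setup shows that the remaining edge labels $m-2, m-3,\ldots$ cannot all be realized. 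Similarly, in the Alice-first case you have Bob answer with $0$ in $v$'s \emph{own} part and say he ``proceeds analogously''; note that Lemma~\ref{lemma:forceA} need not apply there (for small $p$ the $0$-vertex has many free neighbours and too few free non-neighbours), and no continuation is given.

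For comparison, the paper's Bob never volunteers $m-1$; after $0\in X$ and $m\in Y$ he plays $1$ on a \emph{second} vertex of $Y$. Since $1$ and $m$ now sit in the same part, the only way to realize edge label $m-1$ is to put $m-1$ in $Y$, so Alice is forced to do that; Bob then plays $2$ in $Y$, forcing $m-2$ into $Y$, and so on until $Y$ is exhausted. A parity count on $|Y|$ then shows the next required edge label $m-z$ or $m-(z+1)$ cannot be realized without duplicating an existing edge label (the candidate label $z+1$ in $X$ collides with the $1$ already in $Y$). The Alice-first case reuses this squeeze on the part containing $m$, with two extra twists: a separate third-move label ($2i-1$) when $p=2$, and a special move ($m-2k$ into $Y$) when Alice's opening label $i=k$ is one of the small labels Bob would otherwise need. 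If you want to salvage your write-up, replace your third move by the paper's ``flood the part containing $m$ with $1,2,\ldots$'' squeeze and carry out the terminal parity argument explicitly.
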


\begin{proof} 
The result for graphs $K_{1,0}$, $K_{1,1}$ and $K_{1,2}$ follows from Theorem~\ref{thm:labpath}. Thus, consider $K_{1,q}$, with $q \geq 3$ and let $(X,Y)$ be a bipartition of $K_{1,q}$. Without loss of generality, let $|X|=1$ and $|Y|=q$. When Alice is the first player, her strategy is to label the vertex in $X$ with $0$ or $q$. This way, regardless of what label or vertex Bob chooses for his later moves, the graph is graceful. On the other hand, if Bob is the first player, he labels the vertex in $X$ with any label other  than $0$ or $q$. Doing so, he precludes this graph from being gracefully labeled since it is impossible to have an edge with label $q$. 

Next, consider $K_{p,q}$, with $p,q\geq 2$, and let $(X,Y)$ be a bipartition of $K_{p,q}$. First, suppose that Bob is the first player. Without loss of generality, suppose Bob labels a vertex in $X$ with 0. Now, Alice is forced to label a vertex in $Y$ with $m=pq$, creating the edge label $m$. On the next move, Bob labels a vertex in $Y$ with 1. Note that the only option left for Alice to create the edge label $m-1$ is by assigning the label $m-1$ to a vertex in Y, so she is forced to make this move. Now, Bob labels another vertex in $Y$ with 2 and, by same reasoning, Alice is forced to label a vertex in $Y$ with $m-2$. This pattern goes on until the vertices in $Y$ are exhausted.

If $|Y|=q$ is even, Bob is the last player to label a vertex in $Y$ and he uses $z=\frac{q}{2}$. In this case, Alice can no longer create the edge label $m-z$, therefore loosing the game. If $|Y|=q$ is odd, then Alice labels the last vertex in $Y$ and she uses the label $m-z$, with $z = \left\lfloor q/2 \right\rfloor$. The next edge label she must guarantee the existence is $m-(z+1)$. The possible ways to create this edge label are: $|(m-(z+1))-0|$; $|(m-z)-1|$; $|(m-(z-1))-2|$; \ldots; $|(m-2)-(z-1)|$; $|(m-1)-z|$; and $|m-(z+1)|$. Since 1, 2, \ldots, $z-1$, $z$ and $m-z$, $m-(z-1)$, \ldots, $m-1$, $m$ are assigned to vertices in $Y$ and, there are no vertices left in $Y$ to label, the only way  Alice can create the edge label $m-(z+1)$ is by labeling a vertex in $X$ with $z+1$. However, this move creates a second edge label $z=|(z+1)-1|$. Therefore, Alice looses the game.

When Alice is the first player, she labels an arbitrary vertex $v$ with a label $i \in \{1,\ldots,m-1\}$. Without loss of generality, suppose $v\in X$. First, consider the case where $|X|=p=2$. The subcase $|X|=|Y|=2$ follows from Theorem~\ref{thm:cyclegrp} since $K_{2,2}\cong C_4$. 
Thus, we may assume that $|Y|\geq 3$. If $i\leq \frac{m}{2}$ (resp. $i\geq \frac{m}{2}+1$), then Bob must label a vertex in $Y$ with 0 (resp. $m$), forcing Alice to label the last available vertex in $X$ with $m$ (resp. $0$). Note that when $i=1$ (resp. $i=m-1$), Alice cannot create the edge label $m-1$, loosing the game. For all other values Bob labels a vertex in $Y$ with $2i-1$ (resp. $2i-(m-1)$). Now, Alice cannot label a vertex with 1 (resp. $m-1$) because it would create a repeated edge label $i-1$ (resp.~$m-i-1$). Since this is the only way she can create the edge label $m-1$, she looses.

We divide the proof for $K_{p,q}$ with $p\geq 3$ and $q\geq 2$ into two cases,  depending on the value of label $i$.

\textbf{Case 1:} $x < i < m-x$, where $x = \left(\frac{p-2}{2}\right)$ if $p=|X|$ is even; or $x = \left(\frac{p-1}{2}\right)$, otherwise.

Bob assigns label $0$ to a vertex in $Y$, thus generating the edge label $i$. Now, Alice is forced to label a vertex in $X$ with $m$, creating the edge label $m$. From this moment on, takes place the same pattern that occurs when Bob is the first player, until the vertices in $X$ are exhausted.

If $|X|$ is odd, Bob is the last player to label a vertex in $X$ and he uses label $x$. In this case, Alice can no longer create the edge label $m-x$, therefore loosing the game. If $|X|$ is even, then Alice labels the last vertex in $Y$ and she uses the label $m-x$. The next edge label she must guarantee the existence is $m-(x+1)$. However, since $1,2,\ldots,x$ and $m-x, m-(x-1),\ldots,m$ are assigned to vertices in $X$ and there are no vertices left in $X$ to label, the only way Alice can create the edge label $m-(x+1)$ is by labeling a vertex in $Y$ with $x+1$ (if $x+1$ was not already used by Alice in her first move). However, this move creates a second edge label $x=|(x+1)-1|$. Therefore, in any case, Alice looses the game.

\smallskip

\textbf{Case 2:} $i = k$ or $i = m-k$, for $1\leq k \leq x$, where $x = \left(\frac{p-2}{2}\right)$ if $p=|X|$ is even; or $x = \left(\frac{p-1}{2}\right)$, otherwise.

We show only the case $i=k$ since the case $i=m-k$ is analogous by complementary labeling. Thus, suppose $i=k$, as defined in the hypothesis. Bob labels a vertex in $Y$ with 0, creating the edge label $k$. Alice then labels a vertex in $X$ with $m$. Then, Bob labels another vertex in $X$ with 1, and so on, until the game reaches the point where Alice assigns label $m-(k-1)$ to a free vertex in $X$. Bob's next move would be to label a vertex in $X$ with $k$, but this was Alice's first move in the game. Now, in order to prevent Alice from labeling a vertex in $X$ with $m-k$, Bob must assign $m-2k$ to a vertex in $Y$. This way, if Alice tries to label a vertex in $X$ with $m-k$, she creates a second edge label $k$. Therefore, Bob wins the game since Alice cannot create the edge label $m-k$.
\end{proof}

An \emph{$\alpha$-labeling} of a graph $G$ on $m$ edges is a graceful labeling $g$ with the additional property that, for every edge $uv \in E(G)$, either $g(u)\leq k<g(v)$ or $g(v)\leq k < g(u)$, for some integer $k \in \{0,\ldots, m\}$. 
A \textit{caterpillar} $cat(k_1, k_2,\ldots, k_s)$ is a special tree obtained from a path $P = (v_1, v_2, \ldots,v_s)$, called \emph{spine}, by joining $k_j$ leaf vertices to $v_j$, for each $j \in \{1, \ldots, s\}$. Rosa~\cite{Rosa1967} proved that every caterpillar has an $\alpha$-labeling.

If a caterpillar $H$ with $m$ edges has diameter at most two, then $H$ is isomorphic to a star $K_{1,m}$ and, by Theorem~\ref{thm:compbipartide}, Alice wins the game on $H$ if she is the first player. On the other hand, Theorem~\ref{thm:catgrp} states that Bob has a winning strategy for all caterpillars with diameter at least three. In order to prove this result, the following lemma is needed.

\begin{lemma}\label{lemma:lableafcat} Let $H=cat(k_1, k_2,\ldots, k_s)$ be a caterpillar with $m$ edges and $v_j$ be an arbitrary vertex in the spine with $k_j>0$ adjacent leaves, for $j\in \{1,\ldots,s\}$. If $(i)$ there exists a leaf $u$ not yet labeled adjacent to $v_j$  and; $(ii)$ the colors $0$ or $m$ have not been used, then Alice cannot label $v_j$ with any color.
\end{lemma}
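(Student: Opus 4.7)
The plan is to case-split on the colour $c\in\{0,1,\dots,m\}$ that Alice might assign to $v_j$ and, in each case, exhibit a reply by Bob that precludes a graceful completion of $H$. The heart of the argument is the structural observation that once the label $0$ (or $m$) is pinned on a leaf, the only possible endpoints of an edge with induced label $m$ are so tightly constrained that such an edge can no longer be produced.

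For $c\in\{0,m\}$ I would appeal directly to Lemma~\ref{lemma:firstlab0}: hypothesis (ii) rules out the clause ``adjacent to a vertex already labeled by Bob with $m$ (resp.\ $0$)'', so Alice may play $0$ or $m$ on $v_j$ only when $v_j$ is adjacent to every remaining free vertex. When some free vertex fails to be a neighbour of $v_j$, the move is losing by Lemma~\ref{lemma:firstlab0}; in the residual situation in which $v_j$ is adjacent to all remaining free vertices, a short direct check using that $u$ is a leaf shows that Bob can still block a necessary large induced difference by replying on another free neighbour of $v_j$.

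The main case is $c\in\{1,\dots,m-1\}$. Bob's reply is to label the free leaf $u$ with $0$, or, if that move is not legal, with $m$. Playing $0$ on $u$ creates the edge $v_ju$ with induced label $c$ and traps the value $0$ on a vertex of degree one. From that point on, the only neighbour of the vertex labelled $0$ is $v_j$, which carries label $c\ne m$, so no edge of $H$ can ever be assigned the induced label $m$; hence the final labelling cannot be graceful and Bob wins. The symmetric choice (Bob plays $m$ on $u$) yields the same conclusion after replacing the current partial labelling by its complementary labelling $\overline{f}$.

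The step I expect to be the main obstacle is verifying that at least one of Bob's two candidate moves is legal at the moment Bob plays, i.e., that $\{c,m-c\}$ is not entirely contained in the current set of edge labels. Since Alice's immediately preceding move was legal, the new induced labels $\{\,|c-f(w)|:w\in N(v_j)\text{ already labelled}\,\}$ are pairwise distinct and disjoint from the previously used ones; a short accounting then forces at least one of $c$ or $m-c$ to be absent from the current edge labels, so Bob does have a legal reply, and the structural observation above finishes the argument.
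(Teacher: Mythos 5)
Your main case is exactly the paper's proof: for $c\in\{1,\ldots,m-1\}$ Bob answers by placing $0$ (or $m$) on the free leaf $u$, and since $u$ has $v_j$ as its unique neighbour and $f(v_j)=c\notin\{0,m\}$, the induced label $m$ can never be produced, so the labelling cannot be completed gracefully. The paper's proof consists of precisely this observation, dismissing $c\in\{0,m\}$ with a bare citation of Lemma~\ref{lemma:firstlab0}, and it does not discuss the legality of Bob's reply at all. You correctly identify both of the points the paper glosses over, but your proposed repairs do not hold up. For the legality of Bob's reply, the ``short accounting'' you invoke is false in general: if $c=m/2$ and an edge labelled $m/2$ is already present (or, more generally, if both $c$ and $m-c$ already occur as edge labels), then \emph{both} of Bob's candidate moves on $u$ are illegal, and no counting argument based on the distinctness of the labels created by Alice's last move can rule this out. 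For $c\in\{0,m\}$, the ``residual situation'' in which $v_j$ is adjacent to every remaining free vertex is not one where ``a short direct check'' lets Bob block: in the star $cat(q)$ this situation occurs on Alice's first move and labelling the centre with $0$ is a \emph{winning} move for her (Theorem~\ref{thm:compbipartide}), so the lemma as literally stated fails there and cannot be proved in that case. Both difficulties vanish in the only place the lemma is actually applied --- Alice's opening move on a caterpillar of diameter at least three, where no edge labels exist yet and $v_j$ is not adjacent to all free vertices --- which is presumably why the paper ignores them; but as written your two auxiliary claims are gaps, one of them unfixable without adding hypotheses.
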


\begin{proof} 
Consider $H=cat(k_1, k_2,\ldots, k_s)$ be a caterpillar and $v_j$ be a vertex in the spine with $k_j>0$ adjacent leaves, for $j\in \{1,\ldots,s\}$. Suppose $v_j$ has at least one leaf neighbor not yet labeled, say $u$, and suppose that the colors $0$ or $m$ have not been assigned to any vertex so far. Consider Alice chooses to label $v_j$ with an arbitrary color $i$, $i\in \{1,\ldots,m-1\}$ (Lemma~\ref{lemma:firstlab0}). Given these conditions, Bob labels $u$ with $0$ or $m$. Since there is no other vertex adjacent to $u$ other than $v_j$, it is impossible to obtain the edge label $m$.
\end{proof}

\begin{theorem}\label{thm:catgrp} Bob has a winning strategy for all caterpillars with diameter at least three.
\end{theorem}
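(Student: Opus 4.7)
The plan is to use Lemmas~\ref{lemma:firstlab0}, \ref{lemma:forceA}, and~\ref{lemma:lableafcat} to trap Alice by realizing the configuration ``a tree-leaf labeled $0$ whose unique neighbor is labeled $m$'', and then to use Bob's remaining move to exhaust label $1$ at a vertex non-adjacent to the $m$-vertex, so that edge label $m-1$ becomes unrealizable.

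I would first handle the case in which Alice is the first player. By Lemma~\ref{lemma:firstlab0}, her opening label lies in $\{1,\dots,m-1\}$, since no vertex of a diameter-$\geq 3$ caterpillar is universal. If her vertex is a spine vertex $v_j$ with $k_j\geq 1$, Lemma~\ref{lemma:lableafcat} immediately wins for Bob: he labels a free leaf of $v_j$ with $0$ or $m$, making edge $m$ unrealizable. Otherwise Alice labels a tree-leaf $v$ (with unique spine neighbor $p$), or an interior spine vertex with no attached leaves. Bob then selects a tree-leaf $z\neq v$ whose unique spine neighbor $w$ is non-adjacent to $v$; I would first verify that such a $z$ exists whenever the caterpillar has diameter at least three, since otherwise the caterpillar would collapse to a star or broom of diameter at most two. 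Bob labels $z$ with $0$, and because $z$ has exactly one free neighbor $w$, Lemma~\ref{lemma:forceA} forces Alice to reply $f(w)=m$.

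Bob's second move blocks edge label $m-1$. The pair $(0,m-1)$ is already impossible, since $z$'s only neighbor is $w=m$, so the only remaining threat is the pair $(1,m)$, which would require label $1$ on a free neighbor of $w$. Bob assigns $1$ to a free vertex non-adjacent to $w$; the diameter and distance hypotheses guarantee that such a vertex still exists after Alice's forced move, and the move is legal because the chosen vertex is non-adjacent to every previously labeled vertex, so no new edge labels are created. Once $1$ is exhausted away from $w$, edge $m-1$ cannot be produced. When Bob moves first, the same strategy applies without the preliminary case split: Bob opens by labeling a tree-leaf with $0$, Alice is forced to label its unique neighbor with $m$, and Bob's second move is again the label-$1$ blocker.

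The main obstacle is verifying the two existence claims above and handling the corner case in which Alice opens with label $1$. In the tightest caterpillars---short paths, $cat(q,1)$, $cat(q,0,0)$, $cat(1,0,1)$, and similar---the tree-leaf $z$ or the non-neighbor required for the label-$1$ move can be forced into a very small set, and an ad hoc strategy is sometimes needed; for example, when Alice labels an interior spine vertex that has no tree-leaf at distance at least $2$, Bob labels a neighbor of Alice's vertex with $0$ in the style of the path argument of Theorem~\ref{thm:labpath}. When Alice's opening label equals $1$, label $1$ is unavailable to Bob; the pair $(1,m)$ then fails automatically whenever $v$ is non-adjacent to $w$, making edge $m-1$ already unrealizable, and in the few remaining subcases a direct analysis shows that Bob can force a duplicate edge label on his final move by exploiting the scarcity of remaining free vertices and labels.
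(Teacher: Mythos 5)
Your main line --- trap Alice into a configuration with a leaf labeled $0$ whose unique neighbor carries $m$, then spend label $1$ on a vertex non-adjacent to the $m$-vertex --- is exactly the paper's strategy, and your use of Lemmas~\ref{lemma:firstlab0}, \ref{lemma:forceA} and~\ref{lemma:lableafcat} matches the paper's. However, there is a genuine gap where you assert that ``the diameter and distance hypotheses guarantee that such a vertex still exists after Alice's forced move.'' This is false for $cat(1,k_2)$ with $k_2\geq 2$ (diameter exactly three): if Alice opens on the leaf of $v_1$ with $i\neq 1$ and Bob plays $0$ on a leaf of $v_2$, then after Alice is forced to put $m$ on $v_2$ every remaining free vertex ($v_1$ and the other leaves of $v_2$) is adjacent to $v_2$, so the label-$1$ blocking move simply does not exist. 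You later acknowledge that such tight caterpillars need ``an ad hoc strategy,'' but you never supply it, and it is not a routine verification: in the paper this case takes a parity-dependent argument in which Bob plays $1$ on $v_1$ (if $m$ is even) or $0$ on $v_1$ (if $m$ is odd), and then one exploits the fact that a caterpillar is a tree --- so all $m+1$ labels must be used --- to show that the label $m-(i-1)$ (resp.\ $m-i$) is eventually forced onto a leaf of $v_2$, duplicating an existing edge label. Note also that the duplicate arises from a forced later move, not from ``Bob's final move'' as you phrase it.

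The second unresolved corner is $cat(k_1,0,k_3)$ when Alice opens by labeling the bare spine vertex $v_2$ with $i=1$: here label $1$ is unavailable to Bob and the pair $(0,m-1)$ is blocked, but your claim that edge label $m-1$ is ``already unrealizable'' does not finish the game by itself, since Bob must still prevent some other duplication-free completion. The paper handles this by having Bob place $2$ on $v_3$ (creating edge label $1$) and then arguing, again via the all-labels-used property of trees, that labels $3$ and $m-1$ must eventually land on leaves of $v_1$ and $v_3$, both inducing edge label $m-3$. Since these two families are precisely where the generic strategy breaks and they occupy the bulk of the paper's proof, leaving them as assertions is a substantive omission rather than a routine detail. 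A smaller issue: your claim that Bob's label-$1$ vertex is ``non-adjacent to every previously labeled vertex'' need not hold (it may neighbor Alice's first vertex), so legality of that move requires a word about why the induced edge label $|i-1|$ is new.
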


\begin{proof} 
Let $H=cat(k_1, k_2,\ldots, k_s)$ be a caterpillar with $m$ edges and with diameter at least three. For the case where $H$ is isomorphic to a path graph, the result follows from Theorem~\ref{thm:labpath}. Thus, suppose $H$ is not a path graph. When Bob is the first player, he assigns label $0$ to a leaf $u$ neighbor of a vertex $v_j$, for $j\in \{1,\ldots, s\}$. Since $u$ is only adjacent to $v_j$, Alice's only option is to label $v_j$ with $m$. Now, Bob wins the game by labeling a vertex not adjacent to $v_j$ with label $1$, making sure Alice cannot get edge label $m-1$.

Next, consider that Alice is the first player. By Lemma~\ref{lemma:lableafcat}, she can only label a leaf or a vertex $v_j$ in the spine whose $k_j=0$, $j\in \{1,\ldots, s\}$. Suppose that Alice chooses to label a leaf adjacent to a vertex $v_p$, $1 \leq p \leq s$, with an arbitrary color $i$, $i\in \{1,\ldots,m-1\}$ (by Lemma~\ref{lemma:firstlab0}). Then, Bob must assign $0$ to a leaf adjacent to a vertex $v_q$, $q=1,\ldots, s$ and $q\neq p$, forcing Alice to label $v_q$ with $m$. If $i=1$ then there are no possibilities left for Alice to label an edge $m-1$. If $i\neq 1$, then it is sufficient for Bob to label a vertex not adjacent to $v_q$ with label $1$. Consequently, there are no possibilities to obtain an edge label $m-1$. However, there is a problem regarding this strategy when it is addressed on a caterpillar $H=cat(1, k_2)$, with $k_2\geq 2$, and Alice starts by labeling the leaf adjacent to $v_1$ with a color $i\neq 1$. It is known that Bob's next move is to label a leaf adjacent to $v_2$ with $0$, forcing Alice to label $v_2$ with $m$ (Lemma~\ref{lemma:forceA}). Now, all free vertices available are adjacent to $v_2$. Thus, there is no vertex that labeled with $1$ guarantees Bob's victory. Therefore, we approach this case differently depending on the  parity of $m$.

First, suppose that $m$ is even. In this case, Bob labels $v_1$ with $1$ on the second move of the match, thus creating the edge label $i-1$. Notice that $i-1=m-m+(i-1)=m-(m-(i-1))$. Since the only vertices left to label are leaves adjacent to $v_2$, from this moment on, any edge label that is created shall have the value $m$ as one of its ends. Notice that, for any $i\in\{2,\ldots,m-1\}$, there exists a value $x \in \{2,\ldots,m-1\}\backslash \{i\}$ that would necessarily be used to label a leaf adjacent to $v_2$ and that is such that $m-x=i-1$. This value is $x=m-(i-1)$. Therefore, we conclude that a caterpillar $H=cat(1, k_2)$ has, in any case, two equal edge labels; so, it cannot be graceful.

Now, suppose that $m$ is odd. In this case, the number of integers from $2$ to $m-1$ is not even. Thus, we cannot apply the same strategy addressed in the case where $m$ is even, since there would be no label $x$ satisfying both conditions $m-x=i-1$ and $x\neq i$ in case $i=\frac{m+1}{2}$. Therefore, on the second play Bob must label $v_1$ with $0$, creating the edge label $i$. By Lemma~\ref{lemma:forceA}, Alice is forced to label $v_2$ with $m$. First, notice that $i=m-m+i=m-(m-i)$. Now, as the only remaining free vertices are the leaves adjacent to $v_2$, any edge label that is obtained from this moment on has label $m$ assigned to one of its ends. We claim that, for any $i\in \{1,\ldots,m-1\}$, there exists a value $y \in \{1,\ldots,m-1\}\backslash \{i\}$ that would necessarily be used to label a leaf adjacent to $v_2$ and which is such that $m-y=i$. Note that this value is $y=m-i$. By same reasoning as the previous case, this graph cannot be graceful. This concludes the case where Alice labels a leaf of the graph on the first move. 

Now, suppose that Alice labels with an arbitrary color $i\in \{1,\ldots,m-1\}$ (Lemma~\ref{lemma:firstlab0}) a vertex $v_j$, $j\in \{1,\ldots,s\}$, in the spine whose $k_j=0$. First, suppose that $H$ is not isomorphic to $cat(k_1,0,k_3)$, $k_1,k_3>0$. On the next move, Bob must assign label $0$ to a leaf adjacent to a vertex $v_p$, $p \in \{1,\ldots, s\}$, where $p\neq j-1$ and $p\neq j+1$. By Lemma~\ref{lemma:forceA}, Alice is forced to label $v_p$ with $m$. If $i\neq 1$, then Bob labels a vertex not adjacent to $v_p$ with $1$ (Note that the existence of a vertex not adjacent to $v_p$ is guaranteed given the fact that the smaller graph we may have with a vertex $v_j$, $j\in \{1,\ldots,s\}$, in the spine whose $k_j=0$ is $cat(k_1, 0, k_3)$, where $k_1, k_3>0$.) This guarantees Alice cannot label an edge $m-1$. If $i=1$, then it is already guaranteed.

However, we cannot apply the previous strategy when $H=cat(k_1, 0, k_3)$, with $k_1,k_3>0$. For this case, Alice starts by assigning label $i\in \{1,\ldots,m-1\}$ to $v_2$. On the next move, Bob assigns $0$ to a leaf adjacent to $v_1$ (for $v_3$ is analogous). Alice is now forced to label $v_1$ with $m$ (Lemma~\ref{lemma:forceA}), creating the edge labels $m$ and $m-i$. If $i\neq 1$, then Bob assigns label $1$ to any vertex not adjacent to $v_1$ and wins the game. On the other hand, if $i = 1$, then Bob labels $v_3$ with $2$, getting the edge label $1=|2-1|$. From this moment on, neither one of the players can label a leaf of $v_3$ with $3$ or, a leaf of $v_1$ with $m-1$, because they would be labeling another edge label $1$. Since all vertex labels $0,1,\ldots,m$ must be used, at some point, a leaf adjacent to $v_1$ will be assigned label $3$, creating an edge label $m-3$, and a leaf adjacent to $v_3$ will be assigned label $m-1$, creating a repeated edge label $m-3$. Therefore, it is not possible to complete a graceful labeling from this configuration.
\end{proof}

The next class of graphs considered in this work are the wheel graphs. A \emph{wheel} $W_n$ is a graph formed by connecting a single vertex $v_n$ to all vertices $v_0,v_1,\ldots,v_{n-1}$ of a cycle $C_n$, where $n\geq 3$. By the definition, a wheel $W_n$ has $n+1$ vertices and $2n$ edges. In this work, the vertex $v_n$, which is adjacent to all the other vertices of $W_n$, is called the \emph{central vertex} of $W_n$.
Figure~\ref{fig:wheels} illustrate some wheel graphs with graceful labelings.

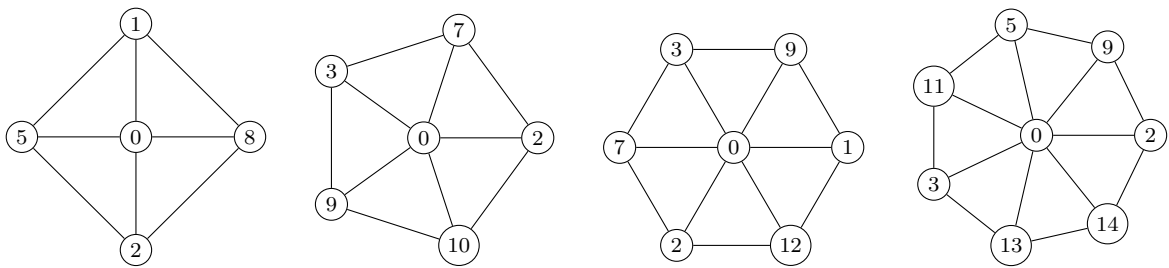
\begin{figure}[!htb]
\centering

\begin{tikzpicture}[scale=1.5]
\tikzstyle{every node}=[draw,shape=circle,minimum size=.3em,inner sep=1.8pt];
\path (0:0cm) node (v0) {\scriptsize $0$};
\path (0:1cm) node (v1) {\scriptsize $8$};
\path (90:1cm) node (v2) {\scriptsize $1$};
\path (2*90:1cm) node (v3) {\scriptsize $5$};
\path (3*90:1cm) node (v4) {\scriptsize $2$};
\draw (v0) -- (v1)
(v0) -- (v2)
(v0) -- (v3)
(v0) -- (v4)
(v1) -- (v2) -- (v3) -- (v4) -- (v1);
\end{tikzpicture}
\hspace{1em}
\begin{tikzpicture}[scale=1.5]
\tikzstyle{every node}=[draw,shape=circle,minimum size=.3em,inner sep=1.9pt];
\path (0:0cm) node (v0) {\scriptsize $0$};
\path (0:1cm) node (v1) {\scriptsize $2$};
\path (72:1cm) node (v2) {\scriptsize $7$};
\path (2*72:1cm) node (v3) {\scriptsize $3$};
\path (3*72:1cm) node (v4) {\scriptsize $9$};
\path (4*72:1cm) node (v5) {\scriptsize $10$};
\draw (v0) -- (v1)
(v0) -- (v2)
(v0) -- (v3)
(v0) -- (v4)
(v0) -- (v5)
(v1) -- (v2) -- (v3) -- (v4) -- (v5) -- (v1);
\end{tikzpicture}
\hspace{1em}
\begin{tikzpicture}[scale=1.5]
\tikzstyle{every node}=[draw,shape=circle,minimum size=.3em,inner sep=1.9pt];
\path (0:0cm) node (v0) {\scriptsize $0$};
\path (0:1cm) node (v1) {\scriptsize $1$};
\path (60:1cm) node (v2) {\scriptsize $9$};
\path (2*60:1cm) node (v3) {\scriptsize $3$};
\path (3*60:1cm) node (v4) {\scriptsize $7$};
\path (4*60:1cm) node (v5) {\scriptsize $2$};
\path (5*60:1cm) node (v6) {\scriptsize $12$};
\draw (v0) -- (v1)
(v0) -- (v2)
(v0) -- (v3)
(v0) -- (v4)
(v0) -- (v5)
(v0) -- (v6)
(v1) -- (v2) -- (v3) -- (v4) -- (v5) -- (v6) -- (v1);
\end{tikzpicture}
\hspace{1em}
\begin{tikzpicture}[scale=1.5]
\tikzstyle{every node}=[draw,shape=circle,minimum size=.3em,inner sep=1.9pt];
\path (0:0cm) node (v0) {\scriptsize $0$};
\path (0:1cm) node (v1) {\scriptsize $2$};
\path (51.42:1cm) node (v2) {\scriptsize $9$};
\path (2*51.42:1cm) node (v3) {\scriptsize $5$};
\path (3*51.42:1cm) node (v4) {\scriptsize $11$};
\path (4*51.42:1cm) node (v5) {\scriptsize $3$};
\path (5*51.42:1cm) node (v6) {\scriptsize $13$};
\path (6*51.42:1cm) node (v7) {\scriptsize $14$};
\draw (v0) -- (v1)
(v0) -- (v2)
(v0) -- (v3)
(v0) -- (v4)
(v0) -- (v5)
(v0) -- (v6)
(v0) -- (v7)
(v1) -- (v2) -- (v3) -- (v4) -- (v5) -- (v6) -- (v7) -- (v1);
\end{tikzpicture}
\caption{Wheels with graceful labelings.}
\label{fig:wheels}
\end{figure}

In 1987, C.~Hoede and H.~Kuiper~\cite{Hoede1987} proved that all wheels are graceful and this result was later rediscovered by R.~Frucht~\cite{Frucht1979} in 1979. A useful fact about graceful labelings of wheels is that there exist no graceful labeling $f$ of $W_n$ that assigns the label $n$ to its central vertex. Note that if such a labeling existed, the labels $0$ and $2n$ should be assigned to two adjacent vertices $v_j$ and $v_{j+1}$ (indices modulo $n$), $1\leq j \leq n$, so as to induce the edge label $2n$. However, such an assignment would generate two repeated edge labels with value $n$, given by $|f(v_n)-f(v_{j+1})|=|n-2n|$ and $|f(v_n)-f(v_j)|=|n-0|$. This observation leads to the following result.

\begin{lemma}[Frucht~\cite{Frucht1979}]\label{lemma:labeln}
 Let $v_n$ be the central vertex of the wheel graph $W_n$. There exist no graceful labeling $f$ of $W_n$ with $f(v_n)=n$.\qed
\end{lemma}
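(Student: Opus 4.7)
The plan is to argue by contradiction, following the sketch already suggested in the paragraph that precedes the lemma statement. Suppose there does exist a graceful labeling $f$ of $W_n$ with $f(v_n)=n$. Since $W_n$ has $m=2n$ edges, the set of available vertex labels is $\{0,1,\ldots,2n\}$, and in order for $f$ to be graceful the full range of edge labels $\{1,2,\ldots,2n\}$ must be realized. In particular, the edge label $2n$ must appear, and the only pair of vertex labels whose absolute difference equals $2n$ is $\{0,2n\}$. So $f$ must assign $0$ and $2n$ to the endpoints of some edge of $W_n$.

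Next I would locate where these two extreme labels can sit. The hub $v_n$ is already labeled $n$, so both $0$ and $2n$ are assigned to rim vertices of the underlying cycle $C_n$. The edges of $W_n$ between rim vertices are exactly the consecutive pairs $v_jv_{j+1}$ (indices mod $n$), so the only way to make $0$ and $2n$ adjacent is to place them on two consecutive rim vertices, say $f(v_j)=0$ and $f(v_{j+1})=2n$ (the reverse assignment is symmetric).

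The final step is to evaluate the two spoke edges incident to $v_n$ and these two rim vertices:
\begin{equation*}
|f(v_n)-f(v_j)| \;=\; |n-0| \;=\; n,
\qquad
|f(v_n)-f(v_{j+1})| \;=\; |n-2n| \;=\; n.
\end{equation*}
Two distinct edges thus receive the same induced label $n$, contradicting gracefulness of $f$. This yields the desired nonexistence and finishes the proof.

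There is no real obstacle here: the argument is forced once one observes that the unique way to realize the largest edge label is via the pair $\{0,2n\}$, and the hub being labeled $n$ is exactly the midpoint that makes the two adjacent spokes collide. The only care needed is to note that $0$ and $2n$ must both land on the rim (since the hub is already used for $n$) and that rim-rim adjacency forces consecutive cycle positions; both facts are immediate from the definition of $W_n$.
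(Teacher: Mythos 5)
Your proof is correct and follows essentially the same route as the paper's own justification (given in the paragraph preceding the lemma): the edge label $2n$ forces labels $0$ and $2n$ onto two adjacent rim vertices, and the hub labeled $n$ then produces two spoke edges with the repeated label $n$. Your only addition is to spell out explicitly that $0$ and $2n$ must both lie on the rim and hence on consecutive cycle vertices, which the paper leaves implicit.
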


Lemma~\ref{lemma:labeln} immediately implies the following result.

\begin{theorem}
 When Bob is the first player, he has a winning strategy for all wheel graphs.
\end{theorem}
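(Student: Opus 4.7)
The plan is to use Lemma~\ref{lemma:labeln} directly, with essentially no further case analysis. Bob's strategy, when he is the first player, is simply to assign label $n$ to the central vertex $v_n$ of the wheel $W_n$ on his very first move.

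First I would justify that this move is legal: since it is the very first move of the game, no edge of $W_n$ has both endpoints labeled yet, so no edge labels have been induced, and any assignment of any label from $\{0,1,\ldots,2n\}$ to any vertex is trivially legal. In particular, Bob may assign the value $n$ to $v_n$. I would then observe that any completion of the game must agree with Bob's first move, so the final vertex labeling $f$ (if all vertices do get labeled) must satisfy $f(v_n)=n$.

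By Lemma~\ref{lemma:labeln}, no graceful labeling of $W_n$ assigns $n$ to its central vertex. Consequently, no matter how the remaining moves of Alice and Bob unfold, the resulting vertex labeling cannot be a graceful labeling of $W_n$: either the game terminates with all vertices labeled but with a repeated induced edge label, or it becomes stuck earlier because some player runs out of legal moves. In either case the graph is not gracefully labeled at the end, so by definition Bob wins.

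There is essentially no obstacle here, since the heavy lifting has already been done by Lemma~\ref{lemma:labeln}; the only thing to check is legality of the opening move, which is immediate. The short argument above proves the theorem for all $n\geq 3$ simultaneously.
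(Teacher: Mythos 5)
Your proposal is correct and is essentially identical to the paper's proof: Bob opens by assigning label $n$ to the central vertex, and Lemma~\ref{lemma:labeln} immediately rules out any graceful completion. The extra remarks on the legality of the opening move are fine but not needed beyond what the paper already states.
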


\begin{proof}
 Consider a wheel graph $W_n$, with $n\geq 3$. Since Bob is the first player, he starts by assigning label $n$ to $W_n$'s central vertex and, by Lemma~\ref{lemma:labeln}, such an assignment precludes Alice from obtaining a graceful labeling of $W_n$.
\end{proof}

\begin{theorem}\label{thm:wheels}
 Bob has a winning strategy for the wheel graphs $W_3$, $W_4$ and $W_5$ even when Alice is the first player.
\end{theorem}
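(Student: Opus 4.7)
The plan is to split the theorem into the three wheels. Since $W_3\cong K_4$, the claim for $W_3$ is exactly the $K_4$ part of Theorem \ref{thm:completegrp}, so nothing new is needed there. The real work is to supply Bob with a winning reply to every opening of Alice on $W_4$ and on $W_5$, and the strategy is to trim the enumeration with two symmetries before doing any case work.

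First, complementary labeling reduces any opening in which Alice plays label $\ell$ to the case $m-\ell$, so I may restrict attention to $\ell\le m/2$. Second, the rotational symmetry of the rim $C_n$ lets me fix the rim vertex Alice labels (if she plays on the rim at all) to be $v_0$. Combined with Lemma \ref{lemma:firstlab0} --- which forbids Alice from opening on a rim vertex with $0$ or $m$, because for $n\ge 4$ every rim vertex has at least one other free, non-adjacent rim vertex --- and Lemma \ref{lemma:labeln} --- which forbids the label $n$ on the centre --- the number of essentially different openings shrinks to a short, explicit list: a handful of central labels together with a handful of rim labels for each of $W_4$ and $W_5$.

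For each such opening Bob's reply is chosen so as to destroy one of the three scarce edge labels $m$, $m-1$, or $n$. Two templates cover most of the list. If Alice labels the centre with some $\ell\ne n$, Bob replies on a rim vertex with a carefully chosen value so that, using Lemma \ref{lemma:forceA} to drive Alice's next move, the only remaining way to realise the edge label $m$ is blocked. If Alice labels a rim vertex with $\ell$, Bob plays $0$ (or $m$) on a rim vertex not adjacent to Alice's vertex, which by Lemma \ref{lemma:forceA} forces Alice to label the centre with $m$ (or $0$); Bob then places label $1$ (or $m-1$) on a vertex that kills every remaining way of producing the edge label $m-1$.

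The main obstacle I expect is that the second template breaks when the rim is too short for a ``safe'' non-adjacent rim vertex to be available, which can happen on $W_4$ and on a few sub-cases of $W_5$ (for instance when Alice opens precisely with $\ell=1$ or with a central label that lies in $\{0,m\}$). Those residual sub-cases must be settled by ad-hoc short move sequences, verifying directly that after Bob's reply the multiset of remaining available vertex labels cannot induce $\{1,\dots,m\}$ as edge labels without a repetition --- typically two of the desired edge labels $m-1$, $m-2$, $n$ are forced to collide. This is a finite and, for these small wheels, quite manageable check; it is the most tedious but least conceptual part of the proof.
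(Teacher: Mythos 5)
Your overall framing (reduce $W_3$ to $K_4$, cut the opening moves down by complementation and rim symmetry, then block one of the scarce edge labels) is in the same spirit as the paper's proof, but there is a concrete error in your second template and, beyond that, essentially all of the nontrivial content is deferred. The error: if Bob answers a rim opening by playing $0$ on a rim vertex $u$ not adjacent to Alice's vertex, Lemma~\ref{lemma:forceA} only forces Alice to put $m$ on \emph{some} free neighbour of $u$ --- and on a wheel $u$ has three neighbours (two rim vertices and the centre), so Alice is emphatically not ``forced to label the centre with $m$.'' She will generally prefer a rim neighbour, and your follow-up move (``place $1$ somewhere that kills every way of producing $m-1$'') is then not available as a single move in all branches. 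You also miss the much cleaner weapon the paper uses for rim openings: by Lemma~\ref{lemma:labeln}, if Alice's first move neither occupies the centre nor uses the label $n$, Bob simply plays $n$ on the centre and wins outright. This collapses the rim openings to the single label $n$, which is where the real work starts (the paper's Case~(i) for $W_4$ and Case~1 with $i=5$ for $W_5$).

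Second, the cases you relegate to ``ad-hoc short move sequences'' are exactly the hard ones, and they do not all succumb to a one-move block of $m-1$. In the paper, the centre opening $i\in\{1,9\}$ on $W_5$ requires a two-stage threat (Bob plays $9$ next to the centre, waits, then plays $10$) whose payoff is blocking the edge label $10$, not $m-1$; the centre opening $i\in\{0,10\}$ --- which Lemma~\ref{lemma:firstlab0} permits, since the centre is adjacent to every free vertex --- is settled only by enumerating all graceful labelings of $W_5$ with $0$ on the centre and observing that none uses the label $5$; and the whole of $W_4$ rests on the explicit list of its eight graceful labelings. Your plan correctly identifies that a finite check will close these cases, but as written it neither performs the check nor gives a mechanism that provably succeeds, so the proof is not complete.
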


\begin{proof}
Consider $W_n$ with $3\leq n \leq 4$ and suppose that Alice is the first player. For $n=3$, the result follows from Theorem~\ref{thm:completegrp} since $W_3 \cong K_4$. Thus, consider $n=4$. Consider a graceful labeling $f$ of $W_4$ represented as a 5-tuple $(a_0,a_1,a_2,a_3,a_4)$ where $a_j = f(v_j)$, for $0\leq j\leq 4$. Considering this representation, all graceful labelings of $W_4$, obtained through computational search, are:
\begin{align*}
(2, 5, 1, 8, 0)&\quad (3, 7, 6, 8, 0)\quad (0, 8, 4, 6, 1)\quad (0, 8, 1, 5, 2)\\
(0, 8, 3, 7, 6)&\quad (0, 8, 2, 4, 7)\quad (7, 3, 6, 0, 8)\quad (5, 1, 2, 0, 8).
\end{align*}

Since Alice aims to win, she has two choices: either (i)  she labels a vertex other than $v_4$ with label $i = 4$ (by doing this, she precludes Bob from assigning label $4$ to the central vertex (see Lemma~\ref{lemma:labeln})); or (ii) she labels the central $v_4$ with a label $i\in \{0,1,\ldots,8\}$ and $i\neq 4$ (by Lemma~\ref{lemma:firstlab0} and Lemma~\ref{lemma:labeln}).
\textbf{Case (i):} Alice labels a vertex $v_j$ other than $v_4$ with label $i = 4$, say that she labels $v_0$. In this case, Bob then assigns label $0$ to the central vertex $v_4$, which forces Alice to assign label $8$ to a neighbor of $v_4$: the only valid choice at this point is to assign $8$ to vertex $v_2$ so as to avoid a repeated edge label $4$. Now, Bob assigns label $1$ to $v_1$ generating edge labels $1,3,4,7,8$ up to this point. The reader can verify by inspection that any remaining vertex label $2,3,5,6,7$ does not generate a graceful labeling of $W_4$ when assigned to the remaining free vertex $v_3$. Therefore, Bob wins the game.  \textbf{Case (ii):} Suppose, Alice labels the central $v_4$ with a label $i\in \{0,1,\ldots,8\}$ and $i\neq 4$. From the graceful labelings shown above, we deduct that $i\neq 3$ and $i\neq 5$. It is also not difficult to deduct from these labelings that, for every remaining valid value of $i$, there exists a value $\ell_i$ that Bob can assign to a free vertex of $W_4$ in order to preclude Alice from obtaining a graceful labeling of the graph; these values are: $\ell_0 = \ell_2 = \ell_6 = \ell_8 = 4$ and $\ell_1 = \ell_7 = 3$. Therefore, Bob is the winner, and the result follows.

\medskip

Next, consider $W_5$ and recall that $|E(W_5)| = 10$. Suppose that Alice is the first player. There are two main cases to consider depending on which vertex Alice chooses to label first. 

\textbf{Case 1:} Alice labels a vertex $v_j$ with degree three, for $0\leq j\leq 4$. In this case, if Alice labels $v_j$ with a label $i \in \{1,\ldots,9\}\backslash \{5\}$, then Bob can now assign label $5$ to central vertex $v_5$. By Lemma~\ref{lemma:labeln}, Bob wins the game since there is no graceful labeling of $W_5$ with label $5$ assigned to its central vertex. Therefore, we can now assume that $i=5$. Then, Bob assigns label $0$ to vertex $v_{j-1}$ or $v_{j+1}$ (indices taken modulo $n$). Without loss of generality suppose he chooses $v_{j-1}$. Thus, Alice is forced to assign label $10$ to a neighbor of $v_{j-1}$ ($v_5$ or $v_{j-2}$) so as to generate the edge label $10$. Alice cannot assign $10$ to $v_5$ since this would create a repeated edge label $5 = |f(v_5)-f(v_j)| = |f(v_{j-1})-f(v_j)|$. Hence, Alice chooses the remaining option that is to assign label $10$ to vertex $v_{j-2}$. Now, in order to preclude Alice from generate the edge label $9$, Bob assigns label $9$ to vertex $v_{j-3}$, generating the edge label $1 = |f(v_{j-3})-f(v_{j-2})|$. The only way for Alice to generate edge label $9$ consists in assigning $1$ to the central vertex $v_5$. However, this move is not allowed since it would generate the repeated edge label $1 = |f(v_5)-f(v_{j-1})|$. Therefore, Bob is the winner.

\textbf{Case 2:} Alice labels vertex $v_5$. Since Alice aims to win the game, she cannot assign label $5$ to $v_5$ (see Lemma~\ref{lemma:labeln}). 

\emph{Subcase 2.1:} Alice labels $v_5$ with a label $i$ such that $3\leq i \leq 8$ and $i\neq 5$. Then, Bob assigns label $0$ to a vertex $v_j$, adjacent to $v_5$, thus generating the edge label $i$. Such a move forces Alice to assign label $10$ to a neighbor of $v_j$ so as to generate the edge label $10$ (Alice's move also generates the edge label $10-i$). However, now it is Bob's turn and he assigns label $1$ to the remaining free neighbor of $v_j$ (it is not difficult to verify that the induced edge labels until now are $10$, $10-i$, $i$, $i-1$ and $1$, and that all of them are distinct). With such a move, Bob precludes Alice from completing a graceful labeling of the graph since it is not possible to generate the edge label $9$. 

\emph{Subcase 2.2:} Alice labels $v_5$ with label $i=2$. Now its is Bob's turn and he assigns label $0$ to a vertex $v_j$, adjacent to $v_5$, thus generating the edge label $2$. Such a move forces Alice to assign label $10$ to a neighbor of $v_j$, say $v_{j+1}$, so as to generate the edge label $10$ (Alice's move also generates the edge label $8 = |f(v_{j+1})-f(v_5)|$). However, now it is Bob's turn and he assigns label $1$ to vertex $v_{j-2}$ (it is not difficult to verify that the induced edge labels until now are $10$, $8$, $2$ and $1$, and that all of them are distinct). With such a move, Bob precludes Alice from completing a graceful labeling of the graph since it is not possible to generate the edge label $9$.

\emph{Subcase 2.3:} Alice labels $v_5$ with label $i\in \{1,9\}$. We only prove the case $i=1$ since the other case is analogous by complementary labeling. In this case, Bob assigns label $9$ to a vertex $v_j$, adjacent to $v_5$, thus generating the edge label $8=|f(v_j)-f(v_5)|$. Now, Alice assigns a label $\ell$, $2\leq \ell \leq 8$, to a free vertex. Since $n\geq 5$, vertex $v_j$ has at least one free neighbor, say $v_{j+1}$. Thus, in his next move, Bob assigns label $10$ to $v_{j+1}$ (generating edge labels $1$ and $9$), forcing Alice to assign label $0$ to the free vertex $v_{j+2}$ so as to generate the edge label $10$. However, this move is not allowed since it generates a repeated edge label $1 = |f(v_5)-f(v_{j+2})| = |f(v_j)-f(v_{j+1})|$. Therefore, Bob is the winner.

\emph{Subcase 2.4:} Alice labels $v_5$ with label $i=0$ (resp.~$i=10$). Consider a graceful labeling $f$ of $W_5$ represented as a $6$-tuple $(a_0,a_1,\ldots,a_5)$, where $a_j = f(v_j)$, for $0\leq j \leq 5$. Figure~\ref{fig:labelingW5} exhibits all graceful labelings of $W_5$ that assign label $0$ to its central vertex $v_5$. Note that none of these graceful labelings assign label $5$ to a vertex of $W_5$. Therefore, in this case, after Alice assigns label $0$ (resp.~$10$) to $v_5$, Bob then assign label $5$ to any vertex of $W_5$, thus winning the game, and the result follows.
\end{proof}

\begin{figure}[!htb]
\centering
\begin{tabu} to 0.9\textwidth {  X[l]  X[l]  X[l]  X[l]  }
 (10, 1, 7, 3, 8, 0) & (10, 2, 7, 3, 9, 0) & (10, 3, 1, 9, 4, 0) & (10, 7, 9, 1, 6, 0)
\end{tabu}
\caption{All graceful labelings of wheel graph $W_5$ that assign label $0$ to its central vertex $v_5$.}\label{fig:labelingW5}
\end{figure}

We note that the approach taken in order to prove Theorem~\ref{thm:wheels} for $W_5$ can almost be successfully extended for arbitrary wheels $W_n$ with $n\geq 6$, with exception of Subcase 2.4. However, we conjecture that Bob has a winning strategy for all these graphs.

\begin{conjecture}
 Bob has a winning strategy for all wheel graphs $W_n$ with $n\geq 6$.
\end{conjecture}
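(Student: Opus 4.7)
The plan is to mimic the case analysis of Theorem~\ref{thm:wheels} for $W_5$, Case~2. Since Bob already wins when he is the first player (by playing $n$ on the central vertex and invoking Lemma~\ref{lemma:labeln}), it suffices to treat the situation in which Alice starts. Case~1 (Alice labels a rim vertex) and Subcases~2.1, 2.2, and 2.3 (Alice labels the central vertex $v_n$ with some $i\in\{1,\ldots,2n-1\}\setminus\{n\}$) extend essentially verbatim: Bob's responses in these subcases depend only on the existence of a free rim vertex with (or without) a given adjacency property, and these structural requirements are trivially satisfied for every $n\geq 6$. The constants $10$ and $9$ appearing in the proof for $W_5$ should simply be replaced by $2n$ and $2n-1$, and the same edge-label obstructions go through. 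The only genuinely new case is therefore Subcase~2.4, in which Alice opens with $f(v_n)=0$ (or, by complementarity, $f(v_n)=2n$).

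Once $f(v_n)=0$ is fixed, the graph is subject to a strong combinatorial constraint: the rim vertex labels form a set $S\subset\{1,\ldots,2n\}$ with $|S|=n$, the $n$ spoke edges carry precisely the labels in $S$, and the $n$ rim edges must therefore carry exactly the labels in $\{1,\ldots,2n\}\setminus S$. For $W_5$ the proof exploited the fact that, among all graceful completions with central label $0$, the value $5$ never lies in $S$; Bob's one-move spoiler simply forced $5\in S$. For $n\geq 6$ this shortcut breaks down, since a direct inspection suggests that every value in $\{1,\ldots,2n\}$ lies in $S$ for at least one graceful completion with central label $0$. My plan is to replace the one-move spoiler by an interactive strategy, in the spirit of the cycle proof (Theorem~\ref{thm:cyclegrp}). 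Bob's second move would place label $2n$ at some rim vertex $v_j$, which commits the spoke $v_nv_j$ to the required edge label $2n$ and forces $2n\in S$. The only remaining sources of the edge label $2n-1$ are then (a) a rim vertex with label $2n-1$ (giving the spoke edge label $2n-1$) or (b) a rim neighbor of $v_j$ with label $1$ (giving the cycle edge label $2n-1$). Bob's subsequent moves would aim to obstruct both sources simultaneously, the precise response depending on whether Alice chooses to contest label $1$, label $2n-1$, or some other small label.

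The main obstacle is that, unlike in $W_5$, Alice has many more free rim vertices on which to distribute her reactions, and tracking how the large edge labels $2n-1, 2n-2, \ldots$ can still be realized becomes delicate. After Bob commits label $2n$ on $v_j$, Alice may answer with $1$ on a rim vertex far from $v_j$, simultaneously protecting both the possibility of creating edge label $2n-1$ through a spoke and the possibility of a later pair producing it through the cycle. A complete proof would seem to require maintaining an invariant of the form ``at every state of the game there exists a large edge label $2n-k$ that cannot be realized by any legal completion'', and showing that Bob can preserve this invariant against every one of Alice's possible moves. Establishing such an invariant for arbitrary $n\geq 6$ --- perhaps after splitting on the residue of $n$ modulo a small integer and refining Bob's second move accordingly --- is the crucial missing step and the reason the statement is only conjectured.
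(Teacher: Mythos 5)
You should first note that the statement you are attempting is stated in the paper only as a conjecture: the authors explicitly remark, just before it, that the strategy proving Theorem~\ref{thm:wheels} for $W_5$ ``can almost be successfully extended for arbitrary wheels $W_n$ with $n\geq 6$, with exception of Subcase 2.4,'' and they offer no proof. Your proposal is therefore not being measured against a proof in the paper, because none exists. On its own terms, your write-up is an accurate diagnosis rather than a proof: your claim that Case~1 and Subcases~2.1--2.3 carry over with $10$ and $9$ replaced by $2n$ and $2n-1$ agrees with the authors' own assessment (and checks out --- Bob's responses in those subcases only use a free neighbor of the $0$- or $2n$-labeled vertex and a vertex at cycle-distance at least two, both of which exist for all $n\geq 6$), and you correctly isolate Subcase~2.4, where Alice opens with $f(v_n)\in\{0,2n\}$, as the sole obstruction. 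You also correctly identify why the $W_5$ argument breaks: for $n=5$ the finitely many graceful labelings with central label $0$ all avoid the value $5$ on the rim, so Bob has a one-move spoiler, whereas for larger $n$ no single forbidden rim label is available.

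The genuine gap is that your replacement for the one-move spoiler --- Bob plays $2n$ on a rim vertex and then tries to kill both realizations of the edge label $2n-1$ --- is only a sketch, and the critical step is missing exactly where you say it is. After Bob's $2n$, Alice can answer with $1$ on a rim vertex not adjacent to $v_j$, and then the edge label $2n-1$ remains obtainable both via a future $2n-1$ on some rim vertex (spoke) and via the pair $(1,2n)$ if the cycle later closes appropriately; Bob cannot block both with one move, and you have not exhibited the invariant (``some large edge label $2n-k$ is unrealizable in every legal completion'') that his subsequent moves would need to maintain, nor verified that such an invariant survives all of Alice's replies. Without that, the argument for Subcase~2.4 does not close, which is precisely why the paper leaves the statement as a conjecture. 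Your contribution here is a correct and honest reduction of the conjecture to this one adversarial subcase, not a resolution of it.
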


We now investigate a class of graphs related to wheel graphs. A \emph{gear graph} $G_n$, with $n\geq 3$ vertices, is a simple graph obtained by subdividing each edge of the outer $n$-cycle $(v_0,v_1,\ldots,v_{n-1})$ of a wheel graph $W_n$ exactly once. The vertices of $G_n$ are named as follows: $v_n\in V(G_n)$ is the original central vertex of $W_n$, $v_0,v_1,\ldots,v_{n-1}$ are the original vertices of the outer $n$-cycle (these vertices are adjacent to $v_n$) and, for each $j\in\{0,\ldots,n-1\}$, $w_j$ is the vertex adjacent to $v_j$ and $v_{j+1}$, indices taken modulo $n$. Note that, by the definition, $G_n$ has $2n+1$ vertices and $3n$ edges. Figure~\ref{fig:gear} exhibits graph $G_3$ with a graceful labeling.

\begin{figure}[!htb]
\centering
\begin{tikzpicture}[scale=1.5]
\tikzstyle{every node}=[draw,shape=circle,minimum size=.3em,inner sep=1.9pt];
\path (0:0cm) node (v0) {\scriptsize $4$};
\path (0:1cm) node (v1) {\scriptsize $0$};
\path (60:1cm) node (v2) {\scriptsize $6$};
\path (2*60:1cm) node (v3) {\scriptsize $5$};
\path (3*60:1cm) node (v4) {\scriptsize $8$};
\path (4*60:1cm) node (v5) {\scriptsize $1$};
\path (5*60:1cm) node (v6) {\scriptsize $9$};
\draw (v0) -- (v2)
(v0) -- (v4)
(v0) -- (v6)
(v1) -- (v2) -- (v3) -- (v4) -- (v5) -- (v6) -- (v1);
\end{tikzpicture}
\caption{Gear graph $G_3$ with a graceful labeling.}
\label{fig:gear}
\end{figure}
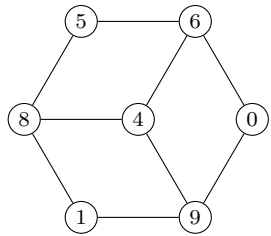

Ma and Feng~\cite{Ma1984} proved in 1984 that all gear graphs are graceful. In the next theorem, we show that Bob has a winning strategy for all gear graphs.

\begin{theorem}\label{thm:gears}
 Bob has a winning strategy for all gear graphs.
\end{theorem}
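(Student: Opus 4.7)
The plan is to split the argument based on which player moves first, following the template used in the earlier proofs in this paper: Bob uses Lemma~\ref{lemma:forceA} to force the label $m = 3n$ onto a prescribed vertex, and then destroys every source of the edge label $3n - 1$ with one additional move.

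When Bob is the first player, the plan is for Bob to label the degree-$2$ vertex $w_0 = 0$. Since $w_0$ has exactly two neighbours ($v_0, v_1$) and, for every $n \ge 3$, there are $2n - 2 \ge 4$ free non-neighbours of $w_0$ in $G_n$, Lemma~\ref{lemma:forceA} forces Alice to play one of $v_0, v_1$ with $3n$; say $v_0 = 3n$. Bob then replies $v_1 = 1$, which is legal since the only induced edge label so far is $3n$. This destroys both sources of the edge label $3n - 1$: the $\{0, 3n-1\}$ source would demand $v_1 = 3n - 1$, and the $\{1, 3n\}$ source would demand that a neighbour of $v_0$ carry the label $1$, but $1$ is now used on $v_1 \notin N(v_0)$.

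When Alice is the first player, she plays some vertex $x$ with label $i \in \{1, \ldots, 3n - 1\}$ by Lemma~\ref{lemma:firstlab0}. The plan is for Bob to pick an index $\ell$ for which $w_\ell$ is free and neither $v_\ell$ nor $v_{\ell+1}$ equals $x$, and then play $w_\ell = 0$; Lemma~\ref{lemma:forceA} again forces Alice to play $v_\ell$ or $v_{\ell+1}$ with $3n$, say $v_\ell = 3n$. Whenever $x \ne v_n$ (so $v_\ell$ and $x$ are non-adjacent, because the only $v$-type neighbour of $v_\ell$ is $v_n$) and $i \ne 1$, Bob plays $v_{\ell+1} = 1$ and the argument of the Bob-first case applies verbatim; when $x \ne v_n$ and $i = 1$, the label $1$ already sits on the non-neighbour $x$ of $v_\ell$, so the $\{1, 3n\}$ source is automatically dead and Bob merely blocks the $\{0, 3n-1\}$ source by playing $v_{\ell+1}$ with any label in $\{2, \ldots, 3n - 2\}$ that does not duplicate an existing edge label.

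The delicate sub-case is $x = v_n$: then $v_n$ is adjacent to every outer vertex, so Alice's forced response $v_\ell = 3n$ immediately induces the edge $v_n v_\ell = 3n - i$, and Bob's preferred move $v_{\ell+1} = 1$ may create a duplicate edge label with $3n - i$ (when $i = 3n - 1$) or with the newly induced $v_{\ell+1} v_n = i - 1$ (when $i = 2$). The plan is to dispatch $i \notin \{1, 2, 3n - 1\}$ by the same move after verifying that the four induced edge labels $3n,\, 3n - i,\, 1,\, i - 1$ are pairwise distinct, and for each of the three exceptional values of $i$ to design a tailored second move, targeting the edge label $3n - 2$ rather than $3n - 1$ when necessary. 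A separate check is required for $n = 3$, where some intended choices of $\ell$ are unavailable and the argument must be completed by direct inspection. The main obstacle will be precisely this sub-case, because once $v_n$ carries a label many incident edge labels get determined simultaneously, so blocking every source of a single target edge label may require a cleverly chosen move rather than the uniform $v_{\ell+1} = 1$ response.
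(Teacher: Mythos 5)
Your overall template is the same as the paper's: Bob places $0$ on a degree-two vertex, invokes Lemma~\ref{lemma:forceA} to force $m=3n$ onto a neighbour, and then kills both sources of the edge label $3n-1$. The Bob-first case is correct and essentially identical to the paper's. The problem is that the two places you explicitly defer --- the sub-case where Alice opens on the central vertex $v_n$, and the graph $G_3$ --- are exactly where the real work lies, and your sketch of the central-vertex sub-case contains a concrete error. You claim that for $i\notin\{1,2,3n-1\}$ the four induced labels $3n$, $3n-i$, $1$, $i-1$ are pairwise distinct, but $3n-i=i-1$ whenever $i=\frac{3n+1}{2}$, which is an admissible integer label for every odd $n$ (e.g.\ $i=8$ in $G_5$). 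For that value your move $v_{\ell+1}=1$ is illegal, so your list of exceptional values is incomplete, and the ``tailored second moves'' for the exceptional values are never exhibited. As it stands the Alice-first half of the argument is a plan, not a proof.

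The paper sidesteps the whole difficulty by changing \emph{where} Bob plays $0$ when Alice has labelled the centre: Bob puts $0$ on a degree-three rim vertex $v_j$ rather than on a $w$-vertex. Then Alice's forced $m$ must go to $w_{j-1}$ or $w_j$, neither of which is adjacent to $v_n$, and Bob's follow-up $1$ (or $m-1$, or $2$ when $i\in\{1,m-1\}$) also lands on a $w$-vertex; no cascade of labels incident to $v_n$ is created, so the only legality condition is $i\neq 1$, handled separately. You should adopt that device rather than fight the duplicate labels at $v_n$. Separately, for $G_3$ the paper needs the full table of graceful labelings (Figure~\ref{fig:firstlabelingsG3}) because with only three $w$-vertices Bob cannot choose $\ell$ far from Alice's first vertex, and several of your generic moves become illegal or insufficient; ``direct inspection'' there is a nontrivial case analysis, not a formality. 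Finally, even in the non-central case your condition $v_\ell,v_{\ell+1}\neq x$ does not rule out $x$ being a $w$-vertex adjacent to $v_\ell$ or $v_{\ell+1}$ (e.g.\ $x=w_{\ell-1}$), which introduces extra induced labels you would still need to check against $1$ and $3n$.
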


\begin{proof}
 Let $G_n$ be a gear graph with $m = 3n$ edges, $n\geq 3$. When Bob is the first player, he assigns label $0$ to a vertex $w_j$ of degree two, which forces Alice to assign label $m$ to a neighbor $v_j$ or $v_{j+1}$ of $w_j$. At the next round, Bob assigns label $1$ to the free neighbor of $w_j$, thus winning the game since the edge label $m-1$ cannot be obtained.

 Now, assume that Alice is the first player. First, suppose that $n \geq 4$. Alice starts by assigning a label $i\in \{1,\ldots,m-1\}$ to a vertex $u$ of $G_n$. There are three cases regarding the choice of $u$. 
 
 \textbf{Case 1:} $u = w_j$, for $0 \leq j \leq n-1$. Without loss of generality, Bob assigns label $0$ to vertex $w_{j+2}$ (indices taken modulo n), thus forcing Alice to assign label $m$ to a neighbor $v_{j+2}$ or $v_{j+3}$ of $w_{j+2}$. If $i\neq 1$, then Bob now assigns $1$ to the free neighbor of $w_{j+2}$, thus winning the game. On the other hand, if $i=1$, then Bob assigns label $m-2$ to the free neighbor of $w_j$, thus winning the game.
 
 \textbf{Case 2:} $u = v_j$, for $0 \leq j \leq n-1$. First, suppose that $i\neq m-1$. Then, Bob assigns label $0$ to $w_j$, which forces Alice to assign label $m$ to $v_{j+1}$. If $i=1$, Bob wins the game since there is no possibility for Alice to create the edge label $m-1$. On the other hand, if $2\leq i \leq m-2$, then Bob assigns label $1$ to vertex $w_{j-1}$, also winning the game.
 Now, suppose that $i=m-1$. In this case, Bob assigns label $m$ to $w_j$, which forces Alice to assign label $0$ to $v_{j+1}$. Hence, Bob wins the game since there is no possibility for Alice to create the edge label $m-1$.
 
 \textbf{Case 3:} $u = v_n$, the central vertex of $G_n$. First, suppose $i\neq m-1$. For the second move, Bob assigns label $0$ to a vertex $v_j$, which forces Alice to assign label $m$ to $w_{j-1}$ or $w_{j}$, say $w_j$. In his next move, Bob assigns label $1$ to $w_{j-1}$ if $i\neq 1$ or he assigns label $m-1$ to any $w_{p}$ with $p\neq j-1$ if $i=1$. In both cases, Bob wins the game since there is no possibility for Alice to create the edge label $m-1$. Now, suppose that $i = m-1$. In this case, Bob assigns label $m$ to a vertex $v_j$, forcing Alice to assign label $0$ to a free neighbor of $v_j$, say $w_{j-1}$. Then, now Bob assigns label $2$ to $w_j$ and wins the game.
 
 \smallskip
 
 Now, consider the gear graph $G_3$. As in the previous case, Alice starts by assigning a label $i\in \{1,\ldots,8\}$ to a vertex $u$ of $G_3$ that can be the central vertex $v_3$, a vertex $v_j$, or a vertex $w_j$, for $0\leq j \leq 2$. Consider any graceful labeling $f$ of $G_3$ represented as a 7-tuple $(f(v_3),f(v_0),f(w_0),f(v_1),f(w_1),f(v_2)$, $f(w_2))$. Figure~\ref{fig:firstlabelingsG3} shows all graceful labelings of $G_3$ that assign the edge label $m=9$ to an edge $v_jw_j$ (that is, that assign the vertex label $0$ to any vertex other than the central vertex). By inspection of these graceful labelings, we obtain that: (i) there is no graceful labeling $f$ of $G_3$ with  $f(v_3) \in\{1,2,3,4\}$ and the vertex label $0$ assigned to a neighbor of $v_3$; and (ii) there is no graceful labeling $f$ of $G_3$ with  $f(v_3) \in\{5,6,7,8\}$ and the vertex label $9$ assigned to a neighbor of $v_3$. 
 From these two facts, we obtain that Alice cannot start the game by labeling the central vertex $v_3$ since Bob wins by choosing between $0$ and $9$ to label a neighbor of $v_3$. Thus, we may assume that Alice starts by labeling a vertex other than $v_3$.

\begin{figure}[!htb]
\centering
\begin{tabu} to 0.9\textwidth { | X[l] | X[l] | X[l] | X[l] |}
 \hline
 (1, 9, 0, 3, 2, 8, 4) & (1, 9, 0, 4, 5, 7, 2) & (1, 9, 0, 4, 6, 7, 2) & (1, 9, 0, 5, 2, 8, 7)\\
 \hline
 (1, 9, 0, 6, 4, 5, 2) & (4, 9, 0, 6, 5, 8, 1) & (1, 9, 0, 6, 7, 5, 2) & (3, 9, 0, 7, 2, 4, 1)\\
 \hline
  (1, 9, 0, 7, 2, 4, 5) & (1, 9, 0, 7, 2, 4, 8) & (1, 9, 0, 7, 3, 6, 8) & (1, 9, 0, 7, 4, 6, 5)\\
 \hline 
 (3, 9, 0, 7, 5, 6, 1) & (3, 9, 0, 7, 8, 6, 1) & (3, 9, 0, 8, 1, 5, 6) & (2, 9, 0, 8, 3, 6, 7)\\
 \hline 
 (3, 9, 0, 8, 4, 1, 2) & (3, 9, 0, 8, 4, 5, 2) & (3, 9, 0, 8, 5, 1, 2) & (2, 9, 0, 8, 5, 6, 4)\\
 \hline
 (7, 0, 9, 1, 4, 3, 5) & (6, 0, 9, 1, 4, 8, 7) & (6, 0, 9, 1, 5, 4, 7) & (6, 0, 9, 1, 5, 8, 7)\\
 \hline
 (7, 0, 9, 1, 6, 3, 2) & (6, 0, 9, 1, 8, 4, 3) & (6, 0, 9, 2, 1, 3, 8) & (6, 0, 9, 2, 4, 3, 8)\\
 \hline
 (8, 0, 9, 2, 5, 3, 4) & (8, 0, 9, 2, 6, 3, 1) & (8, 0, 9, 2, 7, 5, 1) & (8, 0, 9, 2, 7, 5, 4)\\
 \hline 
 (6, 0, 9, 2, 7, 5, 8) & (8, 0, 9, 3, 2, 4, 7) & (5, 0, 9, 3, 4, 1, 8) & (8, 0, 9, 3, 5, 4, 7)\\
 \hline 
 (8, 0, 9, 4, 7, 1, 2) & (8, 0, 9, 5, 3, 2, 7) & (8, 0, 9, 5, 4, 2, 7) & (8, 0, 9, 6, 7, 1, 5)\\
 \hline
\end{tabu}
\caption{All graceful labelings of gear graph $G_3$ with the edge label $9$ assigned to an edge of the cycle on $6$ vertices induced by $V(G_3)\backslash \{v_3\}$.}\label{fig:firstlabelingsG3}
\end{figure}
 
 First, consider that Alice starts by labeling a vertex of degree three, say $v_1$, with label $i\in\{1,\ldots,8\}$. Then, Bob assigns label $0$ to $v_3$, which forces Alice to assign label $9$ to a free neighbor of $v_3$, say $v_0$. If $i\neq 1$ and $i\neq 8$, then it is sufficient Bob to assign label $1$ to the last free neighbor of $v_3$ in order to preclude Alice from generating the edge label $8$. On the other hand, if $i=1$ (resp.~$i=8$), then Bob assigns label $8$ (resp.~$1$) to $w_1$, therefore, winning the game.  
 
 Now, consider that Alice starts by labeling a vertex of degree two, say $w_1$, with label $i\in\{2,\ldots,7\}$. Then, Bob assigns label $0$ to any neighbor of $w_1$, say $v_2$, which forces Alice to assign label $9$ to a free neighbor of $v_2$ ($v_3$ or $w_2$). Note that Alice cannot assign label $9$ to $w_2$ since Bob can use the information provided by Figure~\ref{fig:firstlabelingsG3} in order to assign an appropriate label to $v_3$ in his next turn so as to preclude Alice from obtaining a graceful labeling of $G_3$. Thus, we have that Alice assigns label $9$ to $v_3$. Since $i\neq 8$ and $i\neq 1$, it suffices Bob to assign label $1$ to vertex $w_2$ of degree two in order to preclude Alice from generating the edge label $8$, thus winning the game. When $i = 1$, for his first move, Bob assigns label $0$ to $v_2$, then Alice assigns label $9$ to $v_3$ and, finally, Bob assigns label $8$ to $w_0$, thus winning the game. The last case is $i=8$. In this case, for his first move, Bob assigns label $0$ to a degree-two vertex not adjacent to $w_1$, say $w_0$. This forces Alice to assign label $9$ to a neighbor of $w_0$. Independently of Alice's choice, Bob can now assign label $5$, $6$ or $7$ to the central vertex $v_3$ in order to win the game (by Figure~\ref{fig:firstlabelingsG3}).
\end{proof}

Next, we consider another class of graphs related to wheel graphs. A \emph{helm} $H_n$, with $n\geq 3$, is the graph obtained from $W_n$ as follows: for each non-central vertex $v_j$ of $W_n$, create a new vertex and link $v_j$ with this new vertex. By the definition, a helm $H_n$ is a graph on $3n$ edges and $2n+1$ vertices, that are named as follows: the unique vertex with degree $n$ is called \emph{center} and is denoted by $v_0$; the neighbors of $v_0$ are called \emph{cycle vertices} and are denoted by $v_1,\ldots,v_n$; the remaining vertices are called \emph{pendent vertices} and are denoted by $v_{n+1},\ldots, v_{2n}$. Moreover, we consider that $v_k$ and $v_{n+k}$ are adjacent and are arranged linearly, $1\leq k \leq n$. In 1984, Ayel~\cite{Ayel1984} proved that all helms are graceful. Figure~\ref{fig:helm} exhibits $H_4$ with a graceful labeling. Theorem~\ref{thm:helms} characterizes the graceful game for helms and its proof uses the following lemma.

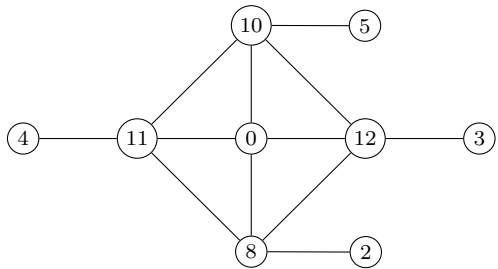
\begin{figure}[!htb]
\centering
\begin{tikzpicture}[scale=1.5]
\tikzstyle{every node}=[draw,shape=circle,minimum size=.3em,inner sep=1.8pt];
\path (0:0cm) node (v0) {\scriptsize $0$};
\path (0:1cm) node (v1) {\scriptsize $12$};
\path (90:1cm) node (v2) {\scriptsize $10$};
\path (2*90:1cm) node (v3) {\scriptsize $11$};
\path (3*90:1cm) node (v4) {\scriptsize $8$};

\path (0:2cm) node (v5) {\scriptsize $3$};
\path (2*90:2cm) node (v6) {\scriptsize $4$};
\path (7*45:1.42cm) node (v7) {\scriptsize $2$};
\path (45:1.41cm) node (v8) {\scriptsize $5$};
\draw (v0) -- (v1)
(v0) -- (v2)
(v0) -- (v3)
(v0) -- (v4)
(v1) -- (v5)
(v3) -- (v6)
(v4) -- (v7)
(v2) -- (v8)
(v1) -- (v2) -- (v3) -- (v4) -- (v1);
\end{tikzpicture}
\caption{Helm $H_4$ with a graceful labeling.}
\label{fig:helm}
\end{figure}

\begin{lemma}\label{lemma:Alabcyclevtx}
Given a \emph{helm} $H_n$, for $j \in\{1,\ldots,n\}$, Alice can label a cycle vertex $v_j$ of $H_n$ in only two cases: when $v_j$'s respective pendent vertex is already labeled or, when the colors $0$ or $3n$ (or both) have already been assigned to a vertex.
\end{lemma}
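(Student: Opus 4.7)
The plan is to prove the contrapositive: if the pendent vertex $v_{n+j}$ associated with the cycle vertex $v_j$ is still free and neither of the labels $0$ nor $3n$ has been assigned to any vertex yet, then Alice's move of labeling $v_j$ immediately allows Bob to prevent a graceful labeling. Since the lemma's two ``allowed'' cases exactly negate the hypothesis, this suffices.

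The first step restricts Alice's possible label choice for $v_j$. Since $H_n$ has $2n+1$ vertices while $v_j$ has degree only four (its neighbors being the center $v_0$, the two cycle neighbors $v_{j-1}$ and $v_{j+1}$, and the pendent $v_{n+j}$), for every $n\geq 3$ there are at least $2n-4\geq 2$ vertices of $H_n$ not adjacent to $v_j$, at least two of which are still free (for instance, any two pendent vertices other than $v_{n+j}$). Combined with the assumption that neither $0$ nor $3n$ has been used so far, Lemma~\ref{lemma:firstlab0} forbids Alice from assigning either of these extremal labels to $v_j$, so she is forced to pick some $i\in\{1,\ldots,3n-1\}$.

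The second step exhibits Bob's winning reply. Because $v_{n+j}$ is a leaf whose unique neighbor is $v_j$, Bob simply assigns label $0$ to $v_{n+j}$ (the argument for $3n$ in place of $0$ is symmetric by complementary labeling). This move induces the edge label $|i-0|=i\neq 3n$ on $v_jv_{n+j}$, and after it the vertex carrying label $0$ is a leaf whose only neighbor is already labeled with $i\neq 3n$. Hence no two adjacent vertices of $H_n$ can ever receive labels $0$ and $3n$ simultaneously, the induced edge label $3n$ can never appear, and Alice loses irrespective of subsequent play. The only delicate point is the correct invocation of Lemma~\ref{lemma:firstlab0}, which is handled by the degree count above; the rest of the argument is a one-move refutation based on the pendent's being a leaf.
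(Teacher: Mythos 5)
Your proof is correct and takes essentially the same route as the paper's: Lemma~\ref{lemma:firstlab0} rules out the extremal labels for $v_j$, and Bob then answers by placing $0$ (or $3n$) on the still-free leaf $v_{n+j}$, whose unique neighbor already carries a non-extremal label, so the edge label $3n$ can never be induced. Your explicit degree count justifying the hypothesis of Lemma~\ref{lemma:firstlab0} is a small addition of detail over the paper's version, not a different argument.
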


\begin{proof}
Let $H_n$, $n\geq 3$, be a helm graph and $v_j$ be a cycle vertex of $H_n$. Suppose $v_j$'s respective pendent vertex $v_{n+j}$ has not yet been labeled and none of the colors $0$ and $3n$ were assigned to a vertex. Moreover, consider it is Alice's turn and she labels $v_j$ with an arbitrary color $i$, $i\in \{0,\ldots,3n\}$. Note that Alice cannot choose $i\neq 0$ nor $i\neq 3n$ since once Alice labels $v_j$ with $i\in \{1,\ldots,3n-1\}$, on the next move Bob labels $v_{n+j}$ with $0$ or $3n$, winning the game.
\end{proof}

\begin{theorem}\label{thm:helms}
Bob has a winning strategy for all helms.
\end{theorem}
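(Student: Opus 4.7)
The plan is to exhibit, for every possible opening, a short sequence of Bob moves that makes the induced edge label $3n-1$ unreachable, which by itself precludes a graceful labeling. The structural fact I will exploit throughout is that an edge of label $3n-1$ must join endpoints labeled either $\{0,3n-1\}$ or $\{1,3n\}$; since the pendant vertices of $H_n$ are leaves, once Bob pins $0$ onto a pendant its only neighbor is forced to carry $3n$, and the other potentially dangerous pair can be disarmed by placing $1$ on a vertex not adjacent to the carrier of $3n$.

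First I would dispose of the easy direction. If Bob opens, he labels a pendant $v_{n+j}$ with $0$; by Lemma~\ref{lemma:forceA} Alice must answer $f(v_j)=3n$, and then Bob labels a second pendant $v_{n+k}$, $k\neq j$, with $1$. Since $v_{n+k}$ is not adjacent to $v_j$, the pair $\{1,3n\}$ cannot appear on an edge, and $\{0,3n-1\}$ cannot either because the sole neighbor of $v_{n+j}$ is $v_j=3n$. When Alice opens, Lemma~\ref{lemma:firstlab0} forbids her using $0$ or $3n$ and Lemma~\ref{lemma:Alabcyclevtx} forbids her from playing on a cycle vertex, so her move is on the center $v_0$ or on a pendant $v_{n+j}$, with some $i\in\{1,\ldots,3n-1\}$. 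In the pendant case Bob repeats the above on a different pendant $v_{n+k}$: he labels it $0$, Alice is forced to answer $v_k=3n$, and Bob then labels a third pendant with $1$ (or, if $i=1$, the label $1$ is already stranded on the non-neighbor $v_{n+j}$ of $v_k$, and any legal Bob move suffices). If Alice opens on the center with $i\neq 1$, the same strategy applies verbatim, with $v_{n+j}$ being any pendant.

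The genuinely delicate case---which I expect to be the main obstacle---is Alice opening on the center with $i=1$, because the default move of forcing $v_j=3n$ itself creates the forbidden edge label $3n-1$ on $v_0v_j$, helping Alice rather than hurting her. The fix is the complementary strategy: Bob plays $3n$ on a pendant $v_{n+j}$, forces $f(v_j)=0$, and then parks $3n-1$ on a pendant $v_{n+k}$ not adjacent to $v_j$. After these three moves label $3n-1$ can arise only via $\{0,3n-1\}$, which is blocked because the neighbors of $v_j$ are $v_0=1$, $v_{n+j}=3n$, and two free cycle vertices, while $3n-1$ is stuck on a non-adjacent pendant; or via $\{1,3n\}$, which is blocked because $3n$ sits on a pendant whose only neighbor is $v_j=0\neq 1$.

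To close, I would record two hygiene checks. Each of Bob's moves lands on a pendant whose cycle neighbor is still free, so no edge is completed by that move and legality is automatic. The required free non-neighbor pendant always exists: at most two pendants are occupied before Bob's blocking move, so $n\geq 3$ guarantees at least one free pendant different from $v_{n+j}$ and $v_{n+k}$, which is trivially a non-neighbor of any single cycle vertex. Assembling the cases yields Bob's winning strategy on every helm.
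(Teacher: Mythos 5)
Your proposal is correct and follows essentially the same route as the paper: the same case split (Bob first; Alice on a pendant; Alice on the center, with the $i=1$ subcase handled by the complementary move of pinning $3n$ on a pendant and parking $3n-1$ elsewhere), relying on the same Lemmas~\ref{lemma:firstlab0}, \ref{lemma:forceA} and~\ref{lemma:Alabcyclevtx}. The only cosmetic difference is that you always place Bob's blocking label on a pendant, where the paper allows any vertex not adjacent to the forced $3n$ (resp.\ $0$) carrier.
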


\begin{proof}
Given a helm graph $H_n$, $n\geq 3$, we first consider the case where Bob is the first player. He starts by labeling a pendent vertex $v_{n+j}$ with $0$, for $1\leq j\leq n$. According to Lemma~\ref{lemma:forceA}, Alice is forced to label $v_{j}$ with $3n$. Then, Bob assigns label $1$ to a vertex not adjacent to $v_{j}$. Since there are no possibilities left for Alice to create the edge label $3n-1$, Bob wins the game.

Now, suppose that Alice is the first player. By Lemma~\ref{lemma:Alabcyclevtx}, she has only two options for her first move: to label the center or a pendent vertex. Suppose Alice chooses to label the center with a arbitrary color $i$, $1\leq i\leq 3n-1$. If $i=1$ (resp. $i\neq 1$), then Bob labels a pendent vertex $v_{n+p}$, $1\leq p\leq n$, with $3n$ (resp. $0$). Alice is now forced to label $v_{p}$ with $0$ (resp. $3n$). In order to win, Bob labels a vertex not adjacent to $v_{p}$ with $3n-1$ (resp. $1$). Next, suppose that Alice chooses to label a pendent vertex $v_{n+j}$, $1\leq j\leq n$, with an arbitrary color $i$, $1\leq i\leq 3n-1$. Then, Bob assigns $0$ to a second pendent vertex $v_{n+p}$, $1\leq p\leq n$ and $p\neq j$. This way, Alice is forced to label $v_{p}$ with $3n$. If $i=1$, then Bob wins since there is no way for Alice to generate the edge label $3n-1$. If $i\neq 1$, Bob assigns label $1$ to a vertex not adjacent to $v_{p}$ and the result follows.
\end{proof}

A class similar to helms are the \emph{web graphs}, defined by Koh et al.~\cite{Kho1980} in 1980 as a graph obtained by connecting the pendent vertices $v_{n+1},v_{n+2},\ldots,v_{2n}$ of a helm into a cycle $(v_{n+1},v_{n+2},\ldots,v_{2n})$ and then linking a single new pendent vertex to each vertex of this outer cycle. Later, Kang et al.~\cite{Kang1996} extended the definition of web graphs so that the process of creating a new cycle by joining the pendent vertices and adding a pendent edge to each vertex of the outer cycle could be repeated as many times as desired. In this paper, we use the definition proposed by Kang et al.~\cite{Kang1996}, in which $W(t,n)$ denotes the web graph formed by $t$ vertex-disjoint $n$-cycles, where $t\geq 2$. These $t$ $n$-cycles are called the \emph{concentric cycles} of $W(t,n)$. A graceful labeling of the web graph $W(2,4)$ is exhibited in Figure~\ref{fig:web}.

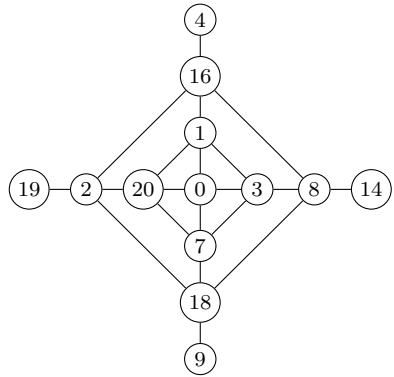
\begin{figure}[!htb]
\centering
\begin{tikzpicture}[scale=1.5]
\tikzstyle{every node}=[draw,shape=circle,minimum size=.3em,inner sep=1.8pt];
\path (0:0cm) node (v0) {\scriptsize $0$};
\path (90:0.5cm) node (v2) {\scriptsize $1$};
\path (0:0.5cm) node (v1) {\scriptsize $3$};
\path (3*90:0.5cm) node (v4) {\scriptsize $7$};
\path (2*90:0.5cm) node (v3) {\scriptsize $20$};
\path (90:1cm) node (v8) {\scriptsize $16$};
\path (0:1cm) node (v5) {\scriptsize $8$};
\path (3*90:1cm) node (v7) {\scriptsize $18$};
\path (2*90:1cm) node (v6) {\scriptsize $2$};
\path (90:1.5cm) node (v12) {\scriptsize $4$};
\path (0:1.5cm) node (v9) {\scriptsize $14$};
\path (3*90:1.5cm) node (v11) {\scriptsize $9$};
\path (2*90:1.5cm) node (v10) {\scriptsize $19$};
\draw (v0) -- (v1)
(v0) -- (v2)
(v0) -- (v3)
(v0) -- (v4)
(v1) -- (v5)
(v3) -- (v6)
(v4) -- (v7)
(v2) -- (v8)
(v8) -- (v12)
(v5) -- (v9)
(v6) -- (v10)
(v7) -- (v11)
(v1) -- (v2) -- (v3) -- (v4) -- (v1)
(v8) -- (v5) -- (v7) -- (v6) -- (v8);
\end{tikzpicture}
\caption{Web graph $W(2,4)$ with a graceful labeling.}
\label{fig:web}
\end{figure}

By the definition, a web graph $W(t,n)$ has $n(t+1)+1$ vertices and $m=n(2t+1)$ edges. We partition the vertex set of $W(t,n)$ into $t+2$ parts. The first part comprises only the central vertex $v_0$, also called \emph{center}. The second part comprises the pendent vertices, denoted by $v_1, v_2,\ldots , v_n$. The other $t$ parts are each one formed by the vertices that give rise to each concentric cycle. We denote $v_{n+1}, v_{n+2},\ldots, v_{2n}$ the vertices of the outer concentric cycle; $v_{2n+1}, v_{2n+2},\ldots , v_{3n}$ the vertices of the next concentric cycle, and so on, until the inner concentric cycle $v_{tn+1},v_{tn+2},\ldots ,v_{(t+1)n}$. We also consider that $v_k, v_{n+k}, v_{2n+k},\ldots,v_{tn+k},v_0$ are arranged linearly, for any $k \in \{1,\ldots,n\}$.

A result similar to that presented in Lemma~\ref{lemma:Alabcyclevtx} can be proved to web graphs and is presented as follows.

\begin{lemma}~\label{lemma:Alabelweb}
Given a web graph $W(t,n)$ with $m$ edges, Alice can label an outer cycle vertex $v_j$, $j \in \{n+1,\ldots,2n\}$, in only two cases: when $v_j$'s respective pendent vertex is already labeled or, when the colors 0 or $m$ (or both) have already been assigned to a vertex.\qed
\end{lemma}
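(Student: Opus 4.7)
The plan is to mirror the argument used for Lemma~\ref{lemma:Alabcyclevtx}, exploiting the fact that in $W(t,n)$ each outer cycle vertex $v_j$, $j\in\{n+1,\ldots,2n\}$, has a private pendent neighbor (namely the vertex $v_{j-n}$) whose only neighbor in the whole graph is $v_j$ itself. This privacy is the structural hook Bob uses to sabotage any premature labeling of $v_j$ by Alice.

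More precisely, I would argue by contraposition. Assume that it is Alice's turn, the pendent $v_{j-n}$ is still free, and neither $0$ nor $m$ has been placed on any vertex. Since $W(t,n)$ is far from complete and Bob has not yet used $m$ (resp.\ $0$) on any vertex, Lemma~\ref{lemma:firstlab0} forbids Alice from using $0$ or $m$ on $v_j$. Hence any hypothetical labeling of $v_j$ by Alice uses some $i\in\{1,\ldots,m-1\}$. On the next move Bob responds by assigning $0$ (or symmetrically $m$) to the free pendent $v_{j-n}$; this is legal because the unique edge incident to $v_{j-n}$ is $v_{j-n}v_j$, whose induced label becomes $i$ (or $m-i$), and this label has not been produced before. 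From this position the edge label $m$ can never be induced: such a label would require $0$ and $m$ on two adjacent vertices, but $0$ now sits on the leaf $v_{j-n}$ whose only neighbor $v_j$ already carries $i\neq m$. Hence Bob wins, contradicting that Alice could safely label $v_j$ under those hypotheses.

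The two allowed configurations in the statement exactly neutralize this sabotage. If $v_{j-n}$ is already labeled, then Bob has no free leaf attached to $v_j$ to abuse. If $0$ or $m$ has already been placed, then the edge label $m$ must be generated using that fixed endpoint, so losing the pendent $v_{j-n}$ to $0$ (or $m$) no longer deprives Alice of the unique way to obtain label $m$; equivalently, Lemma~\ref{lemma:firstlab0} may now even permit Alice to use $0$ or $m$ herself on $v_j$ if $v_j$ is adjacent to the fixed partner.

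The main subtlety to check is just that Lemma~\ref{lemma:firstlab0} genuinely applies when Alice contemplates labeling $v_j$: one must observe that $W(t,n)$ has free vertices non-adjacent to $v_j$ (true for any $t\geq 2$ and $n\geq 3$) and that Bob has not yet placed the complementary extreme label. Both are immediate from the standing hypotheses, so I expect no real obstacle — the proof is a short adaptation of the helm lemma, with the role of the helm's pendent played by the outermost pendent $v_{j-n}$ of the web.
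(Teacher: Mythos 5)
Your argument is correct and is exactly the adaptation the paper intends: the paper states this lemma with only a pointer to Lemma~\ref{lemma:Alabcyclevtx}, whose proof is precisely your move of having Bob place $0$ or $m$ on the still-free pendent $v_{j-n}$, whose unique neighbor $v_j$ now carries a label in $\{1,\ldots,m-1\}$, killing the edge label $m$. Your added checks (applicability of Lemma~\ref{lemma:firstlab0} and legality of Bob's reply) are at least as careful as the paper's own treatment of the helm case.
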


Kang et al.~\cite{Kang1996} proved that $W(2,n)$, $W(3,n)$, $W(4,n)$ are graceful and, Abhyankar~\cite{Abhyankar2002} proved the same for $W(t,5)$, $5\leq t\leq 13$. Even though there are not many results for the gracefulness of the web graphs, we provide a strategy in which we guarantee that Bob is the winner on any web graph.

\begin{theorem}~\label{thm:web}
Bob has a winning strategy for all webs.
\end{theorem}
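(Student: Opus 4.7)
The plan is to mirror the helm strategy of Theorem~\ref{thm:helms}, replacing Lemma~\ref{lemma:Alabcyclevtx} with its web counterpart Lemma~\ref{lemma:Alabelweb}. Let $W(t,n)$ have $m=n(2t+1)$ edges with $t\geq 2$ and $n\geq 3$, and note that each pendent $v_j$, $1\leq j\leq n$, has the outer cycle vertex $v_{n+j}$ as its unique neighbor.

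When Bob moves first, he labels a pendent $v_j$ with $0$. By Lemma~\ref{lemma:forceA}, Alice is forced to label $v_{n+j}$ with $m$, after which Bob places $1$ on any vertex not adjacent to $v_{n+j}$ (which exists since $v_{n+j}$ has only four neighbors in a graph with $n(t+1)+1\geq 7$ vertices). The label $0$ is then trapped on a pendent whose only neighbor already carries $m$, and label $1$ is not adjacent to $m$, so the edge label $m-1$ can never be produced.

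When Alice moves first, Lemma~\ref{lemma:firstlab0} forbids $\{0,m\}$ and Lemma~\ref{lemma:Alabelweb} forbids the outer cycle, so her first move assigns some $i\in\{1,\ldots,m-1\}$ to a vertex $u$ that is either the center $v_0$, a pendent $v_j$, or an inner cycle vertex $v_{\ell n+k}$ with $2\leq \ell\leq t$. In each case Bob responds by assigning $0$ to some pendent $v_p$, and by Lemma~\ref{lemma:forceA} Alice is forced to label $v_{n+p}$ with $m$. To block the edge label $m-1$, Bob proceeds as follows: if $i\neq 1$, he places $1$ on a free pendent $v_q$ with $q\neq p$ (which is not adjacent to $v_{n+p}$ and exists since $n\geq 3$); if $i=1$, Bob instead picks $p$ in advance so that $u$ is not adjacent to $v_{n+p}$. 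In both subcases $0$ and $m$ are locked on a pendent and its unique neighbor, while $1$ and $m$ never occupy adjacent vertices, so $m-1$ is unreachable and Bob wins.

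The only step requiring care is the subcase $i=1$: since $1$ is already on the board, Bob cannot simply react, and must choose $p$ with $u\not\sim v_{n+p}$ before playing. This is always possible because each admissible $u$ has at most one outer cycle neighbor --- the pendent $v_j$ is adjacent only to $v_{n+j}$, the second-layer vertex $v_{2n+k}$ is adjacent only to $v_{n+k}$ on the outer cycle, and the center and deeper inner vertices have no outer cycle neighbor at all --- so at most one value of $p$ is forbidden and $n\geq 3$ guarantees a valid choice. No further complication arises from the depth $t$, so the argument closes uniformly for every $W(t,n)$.
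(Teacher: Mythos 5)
Your proposal is correct and follows essentially the same strategy as the paper's proof: Bob anchors label $0$ on a pendent vertex, forces $m$ onto its unique (outer-cycle) neighbor via Lemma~\ref{lemma:forceA}, and then blocks the edge label $m-1$ by controlling where label $1$ sits, with the $i=1$ subcase handled by choosing the pendent $v_p$ off the spoke of Alice's first vertex (the paper phrases this as choosing $v_p$ not ``arranged linearly'' with $u$). The only cosmetic difference is that you make explicit the counting argument for why such a choice of $p$ and of the free pendent $v_q$ always exists.
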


\begin{proof}
Let $W(t,n)$ be a web graph with $m$ edges. When Bob is the first player he assigns label $0$ to a pendent vertex $v_j$, for $j\in \{1,\ldots, n\}$. By Lemma~\ref{lemma:forceA}, Alice's only option is to label $v_{n+j}$ with $m$. Now, Bob assigns label $1$ to a vertex not adjacent to $v_{n+j}$ and he wins.

Now, suppose that Alice is the first player. By Lemma~\ref{lemma:Alabelweb} and by the symmetry of the graph, we have that Alice has only three options for her first move.
\textbf{Option 1:} Alice labels the center $v_0$ with an arbitrary color $i\in \{1,\ldots , m-1\}$. In this case, Bob assigns the color 0 to a pendent vertex $v_p$, $p\in\{1,\ldots,n\}$, forcing Alice to label $v_{n+p}$ with $m$. If $i=1$, then the game is over and Bob is the winner since there is no possibility left for Alice to create the edge label $m-1$. If $i\neq 1$, in order to win the game Bob labels any vertex that is not adjacent to $v_{n+p}$ with 1.
\textbf{Option 2:} Alice labels a pendent vertex $v_j$, $j\in\{1,\ldots,n\}$ with an arbitrary color $i\in \{1,\ldots , m-1\}$. In this case, Bob labels a second pendent vertex $v_p$, $p\in \{1,\ldots,n\}$ and $p\neq j$, with 0. Alice is now forced to label $v_{n+p}$ with $m$. If $i=1$ then the game is over and Bob wins. If $i\neq 1$, Bob labels any vertex that is not adjacent to $v_{n+p}$ with 1 in order to win.
\textbf{Option 3:} Alice labels a vertex $v_j$, $j\in\{2n+1,\ldots, (t+1)n\}$, with an arbitrary color $i\in \{1,\ldots , m-1\}$. In this case, Bob assigns the color 0 to a pendent vertex $v_p$, $p\in \{1,\ldots,n\}$, so that $v_j$ and $v_p$ are not arranged linearly. This forces Alice to label $v_{n+p}$ with $m$. If $i=1$ then the game is over and Bob wins. If $i\neq 1$, Bob must label any vertex that is not adjacent to $v_{n+p}$ with 1 and he also wins.
\end{proof}

An \emph{n-dimensional hypercube graph} $Q_n$, or just \emph{hypercube}, is defined recursively in terms of the cartesian product of two graphs as follows:
\begin{enumerate}[(i)]
 \itemsep0em
 \item $Q_1 = K_2$
 \item $Q_n= K_2\square Q_{n-1}$
\end{enumerate}

The hypercube graph $Q_3$ is exhibited in Figure~\ref{fig:hypercube} with a graceful labeling. Hypercube graphs have been much studied in graph theory and it is well known~\cite{Harary1988} that every $Q_n$ is bipartite, $n$-regular and has $|V(Q_n)|=2^n$ and $|E(Q_n)|=n2^{n-1}$. In 1981, Kotzig~\cite{Kotzig1981} proved that all hypercubes are graceful. In the next result, we characterize the graceful game for hypercubes.

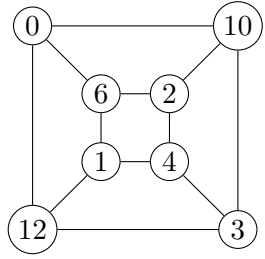
\begin{figure}[!htb]
\centering
\begin{tikzpicture}[main_node/.style={draw,shape=circle,minimum size=.3em,inner sep=1.9pt},scale=0.6]
    \node[main_node] (1) at (0,0) {12};
    \node[main_node] (2) at (1.5,1.5) {1};
    \node[main_node] (3) at (3,1.5) {4};
    \node[main_node] (4) at (4.5,0) {3};
    
    \node[main_node] (5) at (0,4.5) {0};
    \node[main_node] (6) at (1.5,3) {6};
    \node[main_node] (7) at (3,3) {2};
    \node[main_node] (8) at (4.5,4.5) {10};
    
    \draw (1) -- (2) -- (3) -- (4) -- (1);
    \draw (5) -- (6) -- (7) -- (8) -- (5);
    \draw (2) -- (6);
    \draw (3) -- (7);
    \draw (1) -- (5);
    \draw (4) -- (8);
\end{tikzpicture}
\caption{A graceful labeling of $Q_3$.}\label{fig:hypercube}
\end{figure}

\begin{theorem}\label{thm:hypercubes} Bob has a winning strategy for all \textit{hypercubes} $Q_n$ with $n\geq 2$.
\end{theorem}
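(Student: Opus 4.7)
The plan is to handle $Q_2$ by invoking Theorem~\ref{thm:cyclegrp} (since $Q_2 \cong C_4$) and to give Bob an explicit winning strategy for $Q_n$ with $n \geq 3$. A key preliminary is that $Q_n$ is $n$-regular and bipartite with $2^n \geq 8$ vertices, so every vertex has $2^n - n - 1 \geq 4$ non-neighbors, and Lemma~\ref{lemma:forceA} can therefore be applied whenever Bob places the label $0$ or $m$.

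When Bob plays first, the plan is to have him label an arbitrary vertex $v$ with $0$, so that by Lemma~\ref{lemma:forceA} Alice is forced to label some $u \in N(v)$ with $m$, producing the edge label $m$. Bob then labels a vertex $x \in N(u) \setminus \{v\}$ with $m-1$, producing the edge label $1$ on the edge $ux$. The crucial observation is that, in the bipartition of $Q_n$, the vertices $v$ and $x$ lie in the same class (both are neighbors of $u$) and hence are non-adjacent, so the pair $\{0, m-1\}$ cannot yield the edge label $m-1$. The only remaining way to obtain $m-1$ is via the pair $\{1, m\}$, which requires $1$ to be placed on a free neighbor of $u$. The plan has Bob respond to Alice's next move either by blocking the free neighbors of $u$ with non-$1$ labels on subsequent turns (if Alice delays realising this pair) or, if Alice immediately plays $y \to 1$ at some $y \in N(u) \setminus \{v, x\}$ to create the edge label $m-1$, by labelling the unique common neighbor $w$ of $v$ and $x$ other than $u$ with a carefully chosen value that produces two new edge labels tight enough to force a later repetition or omission.

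When Alice plays first, Lemma~\ref{lemma:firstlab0} forces her to choose some label $i \in \{1, \ldots, m-1\}$ on a vertex $w$. The plan is to have Bob respond by labelling the antipode $v$ of $w$ with $0$; since $d(v, w) = n \geq 3$, the vertex $w$ sits at distance $n - 1 \geq 2$ from every $u \in N(v)$. Lemma~\ref{lemma:forceA} then forces Alice to place $m$ at some $u \in N(v)$, and Bob proceeds as in the Bob-first case. For the boundary values $i = 1$ (respectively $i = m-1$), the pair $\{1, m\}$ (respectively $\{0, m-1\}$) is automatically dead by virtue of $w$'s position, and Bob's task reduces to killing the surviving pair with a single move. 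The main obstacle, for both orders of play, is to verify that Bob's later blocking move truly precludes every graceful completion: for $n = 3$ this reduces to a finite case check of the remaining three free vertices, while for $n \geq 4$ one must exploit the bipartite regularity of $Q_n$ together with a counting argument on edge labels still needed versus edges still free.
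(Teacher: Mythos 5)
Your opening is on the right track ($Q_2\cong C_4$ handled by Theorem~\ref{thm:cyclegrp}; Bob opens with $0$ at $v$ and Lemma~\ref{lemma:forceA} forces Alice to answer with $m$ on a neighbour $u$), but Bob's second move is where the strategy breaks. You have Bob place $m-1$ on $x\in N(u)\setminus\{v\}$. This does kill the pair $\{0,m-1\}$, since $v$ and $x$ lie in the same class of the bipartition, but it leaves the pair $\{1,m\}$ fully alive: $u$ still has $n-2\geq 1$ free neighbours, it is Alice's turn, and she simply plays $1$ on one of them, legally realising the edge label $m-1$ (the only new induced label is $m-1$ itself, since all her vertex's other neighbours are unlabelled). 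Neither fallback repairs this: ``blocking the free neighbours of $u$ with non-$1$ labels on subsequent turns'' is impossible because Alice needs only one turn and moves before Bob can block, and ``labelling the common neighbour of $v$ and $x$ with a carefully chosen value that forces a later repetition or omission'' names no value and derives no contradiction. The Alice-first case inherits the same unproved continuation, and the closing appeal to ``a counting argument on edge labels still needed versus edges still free'' is an acknowledgement that the argument is unfinished rather than a proof of it.

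The idea you are missing is that Bob's small labels must go on neighbours of the $0$-labelled vertex $v$, not of the $m$-labelled vertex $u$. In the paper's strategy Bob plays $1$ on a second neighbour of $v$; since that vertex and $u$ both lie in $N(v)$ and hence in the same class, the pair $\{1,m\}$ is dead and Alice is \emph{forced} to put $m-1$ on yet another neighbour of $v$. Inductively, after Bob plays $k$ on a neighbour of $v$, every pair $\{j,\,m-k+j\}$ with $j\geq 1$ has both labels sitting in the class containing $N(v)$, so the only live pair for the edge label $m-k$ is $\{0,m-k\}$, and Alice must keep consuming neighbours of $v$. The $n$ neighbours run out: if $n$ is even Bob makes the last of these moves with label $n/2$ and Alice can never realise $m-n/2$; if $n$ is odd one further move by Bob, placing $(n-1)/2+1$ on a vertex not adjacent to the $m$-labelled vertex, finishes the game. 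The Alice-first case then requires a short case analysis on her first label $i$ (the paper distinguishes four ranges of $i$, adjusting where Bob places his $0$) rather than a single antipodal reply followed by the Bob-first continuation.
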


\begin{proof} Let $Q_n$ be a hypercube with vertex set $V(Q_n)=\{v_1,v_2,\ldots,v_{2^n}\}$, $n\geq 2$. Since $Q_2\cong C_4$, the result for $Q_2$ follows from Theorem~\ref{thm:cyclegrp}. Thus, consider $Q_n$ with $n\geq 3$. Since $Q_n$ is bipartite, we partition $V(Q_n)$ into two disjoint sets $X$ and $Y$ such that
$X = \{v_j \colon 1\leq j \leq 2^{n-1}\}$ and $Y = \{v_j \colon 2^{n-1}+1 \leq j \leq 2^n\}$. 

When Bob is the first player, he starts by assigning $0$ to any vertex. Without loss of generality, suppose Bob chooses $v_1$. By Lemma~\ref{lemma:forceA}, Alice is forced to label a vertex adjacent to $v_1$ with $m = n2^{n-1}$. Without loss of generality, suppose that all $v_{2^{n-1}+j}$, for $1\leq j \leq n$, are adjacent to $v_1$. By the symmetry of the hypercube, we can also suppose w.l.o.g that Alice chooses $v_{2^{n-1}+1}$ to label. Now, Bob must assign label $1$ to a vertex adjacent to $v_1$, say $v_{2^{n-1}+2}$ (w.l.o.g). Alice is left with no other choice than to assign label $m-1$ to a vertex adjacent to $v_1$, say $v_{2^{n-1}+3}$ (w.l.o.g).

Since the edge labels $m$ and $m-1$ were already obtained, the next edge label Alice needs to create is $m-2$. Note that the vertex labels $1$ and $m-1$ are labeling two vertices in part $Y$, so it is impossible for Alice to use them in order to create the edge label $m-2$. Since Bob aims to lessen Alice's possibilities of using the pairs of values $0,m-2$ and $2,m$, he assigns label $2$ to $v_{2^{n-1}+4}$, forcing Alice to label $v_{2^{n-1}+5}$ with $m-2$. Note that this process can be repeated until there are no more free  vertices adjacent to $v_1$. 
If $n$ is even, then Bob guarantees his victory. Note that the last vertex adjacent to $v_1$ is labeled by Bob with the label $\frac{n}{2}$. That way, Alice cannot create the edge label $m-\frac{n}{2}$.
If $n$ is odd, then the last vertex adjacent to $v_1$ is labeled by Alice with the color $m-\left(\frac{n-1}{2}\right)$. The next edge label that Alice would need to generate is $m-\frac{n-1}{2}-1=m-\left(\frac{n-1}{2}+1\right)$. This edge label can be generated by the pair of values $i$ and $m-\left(\frac{n-1}{2}+1\right)+i$, for $0\leq i\leq \frac{n-1}{2}+1$. Moreover, note that the only pair of labels that is not labeling vertices on the same part is $m$ and $\frac{n-1}{2}+1$. Thus, Bob labels a vertex that is not adjacent to $v_{2^{n-1}+1}$ with $\frac{n-1}{2}+1$, winning the game.

Now, suppose that Alice is the first player. Without loss of generality, she starts by labeling $v_1\in X$ with an arbitrary label $i$. We consider four different cases depending on the value of $i$.

\textbf{Case 1:} $i=1$ (resp. $i=m-1$). On the next move, Bob labels a vertex $w\in Y$ that is not adjacent to $v_1$ with $0$ (resp. $m$). Then, Alice is forced to label a vertex $u\in X$ adjacent to $w$ with $m$ (resp. $0$). Now, Bob labels any vertex in $Y$ with $m-1$ (resp. $1$) exhausting Alice's possibilities of creating the edge label $m-1$.

\textbf{Case 2:} $2\leq i\leq \left\lfloor\frac{n}{2}\right\rfloor$. In this case, Bob assigns label $0$ to a vertex $w$ adjacent to $v_1$, $w\in Y$. Then Alice assigns label $m$ to a vertex $u$ adjacent to $w$, $u\in X$. From this moment on, Bob uses the same strategy for when he is the first player until reaches the point where Alice labels a  vertex adjacent to $w$ with label $m-(i-1)$. Now, it is sufficient for Bob to label any vertex in $Y$ with $m-i$. This way, Alice cannot create the edge label $m-i$, loosing the game.

\textbf{Case 3}: $\left\lfloor\frac{n}{2}\right\rfloor < i < m-\left\lfloor\frac{n}{2}\right\rfloor$. When $n$ is odd, Bob assigns label $0$ to a vertex $w \in Y$ adjacent to $v_1$. On the other hand, when $n$ is even, Bob assigns label $0$ to a vertex $w\in X$ that is not adjacent to $v_1$. Then, in the next turn, Alice is forced to assign label $m$ to a free neighbor of $w$  so as to generate edge label $m$. From this moment on, Bob applies the same  strategy for when he is the first player: in the next turn, Bob assigns label $1$ to a free neighbor of $w$, which forces Alice to assign label $m-1$ to another free neighbor of $w$ so as to generate edge label $m-1$, and so on, until the game reaches the point where Bob assigns label $\lfloor n/2 \rfloor$ to the last free neighbor of $w$. Bob's last move precludes Alice from obtaining the edge label $m-\lfloor n/2 \rfloor$ (note that the unique way left to obtain edge label $m-\lfloor n/2 \rfloor$ is by using the pair of labels $0$ and $m-\lfloor n/2 \rfloor$. However, every neighbor of $w$ is already labelled and none of them was assigned label $m-\lfloor n/2 \rfloor$). Therefore, Bob wins the game.

\textbf{Case 4:} $i=m-k$ for $2\leq k \leq \left\lfloor\frac{n}{2}\right\rfloor$. Bob assigns label $0$ to a vertex $w\in Y$ that is not adjacent to $v_1$. Alice is forced to assign label $m$ to a vertex $u$ adjacent to $w$, $u\in X$. From this moment on, Bob applies the same strategy he uses when he is the first player until reaches the point where Alice assigns label $m-(k-1)$ to a neighbor of $w$. Now, it is sufficient for Bob to label another vertex adjacent to $w$ with $k$. This way, it is not possible for Alice to create the edge label $m-k$. Therefore, Bob is the winner and the result follows.
\end{proof}

The \emph{prism} graph $P_{r,2}$, $r\geq 3$, is defined as the cartesian product $C_r \square P_2$ of a cycle on $r$ vertices and path $P_2$. Frucht and Gallian~\cite{Frucht1988} proved that every prism is graceful. Theorem~\ref{theorem:Prisms} characterizes the graceful game for all prisms.

\begin{theorem}\label{theorem:Prisms}
 Bob has a winning strategy for all prisms.
\end{theorem}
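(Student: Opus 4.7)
The plan is to give Bob a concrete, short strategy in each of the two opening scenarios (Bob first / Alice first) that forces, within three or four of Bob's moves, the unavailability of either edge label $m-1$ or edge label $m-2$. Fix notation: the outer cycle is $(u_0,u_1,\ldots,u_{r-1})$, the inner cycle is $(w_0,w_1,\ldots,w_{r-1})$, and $u_i w_i$ is a rung edge for every $i$; so $m = 3r$ and every vertex has degree $3$.

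When Bob starts, he labels $u_0$ with $0$. Since $u_0$ has $2r-4 \geq 2$ non-neighbors, Lemma~\ref{lemma:forceA} forces Alice to label a neighbor of $u_0$ with $m$; by the rotational and upper/lower symmetry of $P_{r,2}$, I may assume she picks $u_1$. Bob now labels $1$ on a second neighbor of $u_0$ chosen to be non-adjacent to $u_1$: he takes $u_{r-1}$ when $r \geq 4$ and $w_0$ when $r=3$ (note that $u_2$ is adjacent to $u_1$ only in the triangular prism, forcing the choice $w_0$ there). After this move the two candidate label-pairs for edge $m-1$ are $\{0,m-1\}$ and $\{1,m\}$; the second pair is killed by Bob's choice (the $1$-vertex and $u_1$ are non-adjacent), so the only surviving threat is Alice placing $m-1$ on the last free neighbor of $u_0$. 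For $r=3$ this placement is already illegal because it would repeat the edge label $1$ inside the triangle $u_0u_1u_2$, so Bob has already won. For $r \geq 4$, if Alice does play $m-1$ there, Bob then labels $w_2$ with $2$: it is non-adjacent to $u_1$ and has no labeled neighbor, hence legal, and it simultaneously kills all three pairs $\{0,m-2\},\{1,m-1\},\{2,m\}$ for edge $m-2$; if Alice plays anything else, Bob uses his turn to deposit $m-1$ on any conflict-free vertex that is not a neighbor of $u_0$, exhausting label $m-1$ and killing edge $m-1$.

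When Alice starts, by Lemma~\ref{lemma:firstlab0} she must play some $i\in\{1,\ldots,m-1\}$ on some vertex $x$. Bob then labels $0$ on a vertex $v$ non-adjacent to $x$ and with at least two free non-neighbors, so Lemma~\ref{lemma:forceA} again triggers and Alice is forced to label a neighbor $v'$ of $v$ with $m$. For $r \geq 4$ the counting $2r-4 \geq 4$ non-neighbors of $x$ makes the choice of $v$ easy; for $r=3$ a direct case check (taking $v$ in the cycle opposite to $x$ and rotationally opposite to it) confirms the hypothesis of Lemma~\ref{lemma:forceA} is still satisfied. From here Bob replicates the Bob-starts pattern: on his next turn he labels a second neighbor of $v$ chosen to be non-adjacent to $v'$ with $1$, and on his fourth turn he either places $2$ on a conflict-free vertex non-adjacent to $v'$ (killing edge $m-2$) or places $m-1$ on a conflict-free vertex non-adjacent to $v$ (killing edge $m-1$), depending on Alice's intermediate response.

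The main obstacle is bookkeeping rather than new ideas: in each branch one must certify that Bob's blocking move does not itself repeat an existing edge label --- this is exactly why the $2$-move is placed on $w_2$, whose neighborhood $\{w_1,w_3,u_2\}$ avoids all previously labeled vertices when $r\geq 4$ --- and that all candidate label-pairs for the targeted edge label are simultaneously destroyed. This relies on three specific non-adjacencies of the prism ($u_{r-1}\not\sim u_1$ for $r\geq 4$, $w_0\not\sim u_1$ for all $r\geq 3$, $w_2\not\sim u_1$ for $r\geq 4$). The Alice-starts branch additionally requires verifying that the floating label $i$ at $x$ does not interfere, which is handled by choosing $v$ far enough from $x$. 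The triangular prism $P_{3,2}$ must be handled with slightly different vertex choices in both branches because of its girth-$3$ triangles.
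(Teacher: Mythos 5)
Your plan for $r\geq 4$ is sound and is essentially the paper's strategy (your Bob-first line is the complementary-labeling image of theirs: they have Bob answer Alice's $m$ with $m-1$ next to the $m$-vertex, you have him answer with $1$ next to the $0$-vertex), but three steps do not hold up. First, the reduction ``I may assume she picks $u_1$'' is not a symmetry: no automorphism of $P_{r,2}$ fixes $u_0$ and carries the cycle-neighbour $u_1$ to the rung-neighbour $w_0$ (for $r\geq 5$ a cycle edge lies in one $4$-cycle and a rung edge in two; for $r=3$ one lies in a triangle and the other does not), so the branch in which Alice answers $m$ on $w_0$ needs its own argument. For $r\geq 4$ that branch is repairable by the same local reasoning, but for $r=3$ it breaks your claim that Bob ``has already won'': with $u_0=0$, $w_0=9$ and Bob's $1$ on $u_1$, Alice may legally put $8$ on $u_2$ (new edge labels $8$ and $7$, nothing repeated), so edge label $8$ is created and the game continues. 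Second, your Alice-first line fails for the openings $i=1$ and $i=m-1$: the unconditional third-turn move ``label a second neighbour of $v$ with $1$'' is impossible when $i=1$ (that label is already spent) and illegal when $i=m-1$ and Alice places $m$ on a common neighbourhood position adjacent to her own vertex (it would duplicate edge label $1$). This is precisely why the paper splits into $2\leq i\leq m-2$ versus $i\in\{1,m-1\}$ and has Bob play $m-1$, not $1$, in the latter case; your fourth-turn alternative points in that direction but is attached to the wrong turn and the wrong trigger.

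Third, and most seriously, $P_{3,2}$ cannot be dispatched by ``slightly different vertex choices''. Besides the Bob-first failure above, in your Alice-first branch the hypothesis of Lemma~\ref{lemma:forceA} is simply false for the move you prescribe: every vertex of $P_{3,2}$ has only two non-neighbours, one of which already carries Alice's label, so the vertex Bob labels $0$ has three free neighbours and only one free non-neighbour, and Alice is not forced in the sense of the lemma (she can, for instance, spend a move neutralizing the unique free non-neighbour). The paper resolves $P_{3,2}$ by enumerating all ten of its graceful labelings and reading off, for each opening, a label that is adjacent to it in none of them; some such global, enumerative input appears unavoidable for the triangular prism, and your proof does not supply it.
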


\begin{proof}
Let $P_{r,2}$ be a prism graph with vertex set $V(P_{r,2}) = \{v_{p,q} \colon 0 \leq p \leq r-1, 0 \leq q \leq 1\}$, where all vertices $v_{p,q}$ with the same index $q$ induce a cycle $C_r$.  Let $m = |E(P_{r,2})|$. First, consider $P_{r,2}$ with $r\geq 4$. When Bob is the first player, he starts by assigning label $0$ to vertex $v_{0,0}$, which forces Alice to assign label $m$ to a neighbor of $v_{0,0}$ (denote by $w$ the vertex she chooses.) In order to lessen Alice's possibilities of getting edge label $m-1$, Bob can assign label $m-1$ (resp. $1$) to a free vertex adjacent to $w$ (resp. $v_{0,0}$). Without loss of generality, for his next move, he assigns $m-1$ to a free neighbor of $w$, call it $u$. Now, Alice generates edge label $m-1$ by assigning label $1$ to the remaining free neighbor of $w$. With this configuration, the only way of Alice getting an edge label $m-2$ is by assigning label $m-2$ to a free neighbor of $v_{0,0}$ not adjacent to $u$ (there is at least one). However, since now it is Bob's turn, he assigns label $m-2$ to a vertex not adjacent to neither $v_{0,0}$ nor $u$ (there is at least one), thus winning the game.
 
Now, consider that Alice is the first player. She starts by assigning label $i \in \{1,2,\ldots,m-1\}$ to an arbitrary vertex of $P_{r,2}$. Adjust notation so that the vertex she chooses is the vertex $v_{0,0}$. We consider two different cases, depending on the value of label $i$.
 
 \textbf{Case 1:} $2 \leq i \leq m-2$. 
 Bob is the next player. He assigns label $0$ to a neighbor of $v_{0,0}$, call it $w$, thus forcing Alice to assign label $m$ to a free neighbor of $w$ so as to generate edge label $m$. 
 Alice can obtain the edge label $m-1$ from the pairs of vertex labels $m,1$ and $0,m-1$. However, now it is Bob's turn and he assigns label $1$ to the remaining free neighbor of $w$, thus precluding Alice from obtaining the edge label $m-1$.
 
 \textbf{Case 2:} $i\in\{1,m-1\}$. 
 We prove the case $i=1$ (the case $i = m-1$ is analogous by the complementary labeling.) Bob is the next player. He assigns label $0$ to a neighbor of $v_{0,0}$, call it $w$, thus forcing Alice to assign label $m$ to one of the free neighbors of $w$, call it $u$, so as to generate edge label $m$. By now, only one possibility of Alice getting edge label $m-1$ remained, which consists on assigning label $m-1$ to the remaining free vertex adjacent to vertex $w$. However, now it is Bob's turn and he assigns label $m-1$ to a remaining free neighbor of $v_{0,0}$ that is not adjacent to $u$, thus generating edge label $m-2$ and precluding Alice from obtaining a graceful labeling of the graph.

\smallskip

In order to complete the proof, it remains to analyze the graph $P_{3,2}$. Consider any graceful labeling $f$ of $P_{3,2}$ represented as a 6-tuple $(f(v_{0,0}),f(v_{1,0}),f(v_{2,0}),f(v_{0,1}),f(v_{1,1}),f(v_{2,1}))$. Figure~\ref{pic:P32graceful} shows all graceful labelings of $P_{3,2}$. 

\begin{figure}[!htb]
\centering
\begin{tabu} to 1\textwidth { | X[l] | X[l] | X[l] | X[l] | X[l] |}
 \hline (0, 9, 1, 4, 2, 7) & (0, 9, 2, 6, 1, 5) & (0, 9, 4, 8, 7, 1) & (0, 9, 5, 2, 1, 8) & (0, 9, 7, 8, 3, 4)\\
 \hline (0, 9, 8, 7, 5, 2) & (0, 1, 8, 9, 7, 4) & (0, 2, 5, 9, 8, 1) & (0, 5, 2, 9, 1, 8) & (0, 8, 1, 9, 4, 7)\\
 \hline
\end{tabu}
\caption{All graceful labelings of $P_{3,2}$.}
\label{pic:P32graceful}
\end{figure}

When Bob is the first player, he assigns label $0$ to an arbitrary vertex, say $v_{0,0}$. Then, Alice is forced to assign label $9$ to a neighbor of $v_{0,0}$ so as to generate edge label $9$. If she assigns $9$ to $v_{0,1}$ (that belongs to the $r$-cycle that does not contain $v_{0,0}$), then, in the next round, Bob can assign label $3$ or $6$ to any vertex in order to win the game (according to Figure~\ref{pic:P32graceful} the partial labeling obtained so far cannot be extended to a graceful labeling of $P_{3,2}$). On the other hand, if, in the second round, Alice assigns $9$ to $v_{1,0}$ or $v_{2,0}$, then Bob assigns label $6$ to a free vertex adjacent to the vertex labeled $9$ in order to win the game (according to Figure~\ref{pic:P32graceful} there is no graceful labeling of $P_{3,2}$ with adjacent vertices labeled $9$ and $6$).

When Alice is the first player, she assigns label $x \in \{1,\ldots,8\}$ to an arbitrary vertex, say $v_{0,0}$. Thus, in order to win the game in his first move, it suffices for Bob to assign to a neighbor of $v_{0,0}$ a label $\ell_x \in \{0,1,\ldots,9\}\backslash\{x\}$ that is not adjacent to $x$ in any graceful labeling of $P_{3,2}$. According to the graceful labelings exhibited in Figure~\ref{pic:P32graceful}, for every $x\in \{1,\ldots,8\}$, $\ell_x$ does exist, as follows: $\ell_1 = 3$, $\ell_2 \in \{3,6\}$, $\ell_3 \in \{0,1,2,5,6,7\}$, $\ell_4\in \{5,6\}$, $\ell_5\in\{3,4\}$, $\ell_6\in\{2,3,4,7,8,9\}$, $\ell_7\in\{3,6\}$, $\ell_8=6$, and the result follows. 
\end{proof}

Given a simple graph $G$ and two distinct vertices $u,v \in V(G)$, the \emph{distance} between $u$ and $v$ in $G$ is the number of edges in a shortest path between $u$ and $v$, and is denoted by $d_G(u,v)$. If there is no such a path, then $d_G(u,v)= \infty$. The \emph{k-th power} of a simple graph $G$ is the simple graph $G^k$ that has vertex set $V(G^k)=V(G)$, with distinct vertices $u,v$ being adjacent in $G^k$ if and only if $d_G(u,v)\leq k$. 

The family of \emph{powers of paths} comprises all graphs $G^k$ obtained when $G \cong P_n$, $n\geq 1$. The $k$-th power of a path $P_n$ is denoted by $P_n^k$. It is known that all powers of paths $P_n^2$ are graceful~\cite{Kang1996} and, in the next theorem, the graceful game is characterized for all $P_n^2$.

\begin{theorem}\label{thm:pathpot} Bob has a winning strategy for all $P_n^2$ with $n\geq 4$. Alice wins on $P_3^2$.
\end{theorem}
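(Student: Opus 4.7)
The plan is to first dispose of $P_3^2$ via the isomorphism $P_3^2 \cong K_3$, whence Theorem~\ref{thm:completegrp} yields Alice the win. The bulk of the work is to describe Bob's winning strategy on $P_n^2$ for $n \geq 4$. Throughout, let $V(P_n^2)=\{v_0,\ldots,v_{n-1}\}$, $m = 2n-3$, and exploit the structural asymmetry $v_0 \not\sim v_3$ (their distance in $P_n$ is three) together with $v_3 \sim v_1$ and $v_3 \sim v_2$.

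When Bob plays first, his opening is $v_0 = 0$. For $n \geq 5$ the $n-3 \geq 2$ free non-neighbors of $v_0$ trigger Lemma~\ref{lemma:forceA}, forcing Alice to label $v_1$ or $v_2$ with $m$; for $n = 4$ the same conclusion follows because labeling the unique non-neighbor $v_3$ with $m$ immediately precludes the edge label $m$. Assuming by symmetry that Alice plays $v_1 = m$, Bob answers with $v_3 = m-1$, creating the edge label $|m-(m-1)|=1$ on $v_1v_3$ and no edge incident to $v_0$. The edge label $m-1$ could arise only from adjacent vertices labeled $(0, m-1)$ or $(1, m)$: the first is excluded since $v_0 \not\sim v_3$, and for the second the only free neighbor of $v_1$ is $v_2$, but $v_2 = 1$ would create $v_0v_2 = 1$, duplicating the label on $v_1v_3$. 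Hence $m-1$ is unreachable and Bob wins.

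When Alice plays first, Lemma~\ref{lemma:firstlab0} restricts her to some $i \in \{1,\ldots,m-1\}$ on a vertex $v_j$, and by the automorphism $v_k \mapsto v_{n-1-k}$ we may assume $j \leq \lfloor (n-1)/2 \rfloor$. Bob's generic response is $v_0 = 0$ (or $v_{n-1} = 0$ when $v_j = v_0$). Whenever $v_j \notin \{v_0,v_1,v_2,v_3\}$ the 0-vertex still has at least two free non-neighbors, so Lemma~\ref{lemma:forceA} forces Alice to place $m$ on $v_1$ or $v_2$, and Bob replays $v_3 = m-1$ with the blocking argument above transferring verbatim. When $v_j \in \{v_1, v_2\}$ the 0-vertex has only one free neighbor, triggering the first clause of Lemma~\ref{lemma:forceA}, and the same $v_3 = m-1$ blocking works for almost all $i$.

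The main obstacle is the finite list of exceptional configurations not immediately resolved by the generic strategy: (i) the small case $n = 4$, where Lemma~\ref{lemma:forceA} applies only partially and each pair $(v_j, i)$ must be checked against the short enumeration of graceful labelings of $P_4^2 \cong K_4 - v_0v_3$; (ii) the extremal values $i \in \{1, m-1\}$, where the canonical move $v_3 = m-1$ is either unavailable (label already used) or illegal (it duplicates an existing edge label), so Bob picks a different blocker on $v_3$ aimed at obstructing a nearby target edge label such as $m-2$ by simultaneously killing both pairs that could generate it; and (iii) the opening $v_j = v_3$, where Bob redirects to label $v_2$ with an intermediate value so as to preclude both $(0, m-1)$ and $(1, m)$ at once. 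The plan is to first establish the generic blocking strategy rigorously and then dispatch each exceptional pair by a minor one-move adaptation, handling $n = 4$ by explicit case inspection.
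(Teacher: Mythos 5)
Your reduction of $P_3^2$ to $K_3$ and your opening move for Bob ($v_0=0$, forcing $m$ onto $v_1$ or $v_2$) match the paper, but there is a genuine gap at the very next step: you write ``assuming by symmetry that Alice plays $v_1=m$'' and then use the single reply $v_3=m-1$. There is no such symmetry. In $P_n^2$ the vertices $v_1$ and $v_2$ have different neighborhoods ($N(v_1)=\{v_0,v_2,v_3\}$ while $N(v_2)=\{v_0,v_1,v_3,v_4\}$ for $n\geq 5$), and no automorphism fixing $v_0$ swaps them. In the branch where Alice answers $v_2=m$, your blocking move $v_3=m-1$ fails: the edge labels present are $m$ (on $v_0v_2$) and $1$ (on $v_2v_3$), and Alice can legally play $v_4=1$, which creates the edge labels $|m-1|=m-1$ on $v_2v_4$ and $|(m-1)-1|=m-2$ on $v_3v_4$, neither of which repeats an existing label. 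So Alice obtains the edge label $m-1$ and your claimed block collapses. This is exactly why the paper's proof splits the two branches and, in the $v_2=m$ branch, has Bob play $v_3=\frac{m+1}{2}$ instead: that value kills \emph{both} candidate placements of $1$ (on $v_1$ or $v_4$) at once, since either would duplicate the edge label $\frac{m-1}{2}$.

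Beyond that concrete error, the Alice-moves-first half of your argument is a plan rather than a proof. Saying the blocking argument ``transfers verbatim'' ignores that Alice's first label $i$ may coincide with, or interact with, Bob's intended blockers ($m-1$, $1$, $\frac{m\pm1}{2}$), and that the same $v_1$-versus-$v_2$ asymmetry recurs in every branch; the paper needs a genuine case split on the value of $i$ even in its general case ($n\geq 10$), plus separate explicit analyses (partly by enumeration of all graceful labelings) for each of $n=4,5,6,7,8,9$. Your items (i)--(iii) correctly identify where the exceptions live, but none of them is actually dispatched, and the general case itself is broken as written. To repair the proof you would need, at minimum, the $\frac{m+1}{2}$-type blocking move for the $v_2=m$ branch and a worked case analysis on $i$ for the Alice-first half.
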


\begin{proof}
Let $P_n^2$ be the 2nd-power of a path $P_n$ on $n\geq 3$ vertices. The linear sequence $(v_0,v_1,\ldots$, $v_{n-1})$ that composes $P_n$ is also a linear sequence of $P_n^2$ since $P_n \subset P_n^2$. Let $m = |E(P_n^2)| = 2n-3$. Since  $P_1^2\cong P_1$, $P_2^2\cong P_2$ and $P_3^2\cong C_3$, the result for these graphs follows from Theorems~\ref{thm:labpath} and~\ref{thm:cyclegrp}. Thus, assume that $n\geq 4$.

Consider that Bob is the first player. He starts by assigning label $0$ to $v_0$, giving Alice the options of labeling $v_1$ or $v_2$ with $m$. 
First, suppose that Alice chooses the first option. Then, Bob labels $v_3$ with $m-1$ on his next move, creating the edge label $1$. It is now impossible for Alice to use the labels $0$ and $m-1$ to create the edge label $m-1$. Her only option would be to use $m$ and $1$ on adjacent vertices and, to do so, she would have to label $v_2$ with $1$. However, this move is forbidden by the rules of the game since it would create a second edge label $1$. Therefore, Bob wins.

Now, suppose that Alice chooses the second option. Then, Bob labels $v_3$ with $\frac{m+1}{2}$ ($m$ is odd for all $n$).\footnote{When $n=4$, Bob wins the game at this point since no label can be assigned to $v_1$ without generate repeated edge labels.} This way, Alice cannot label either $v_1$ or $v_4$ with $1$ since it would generate a repeated edge label $\frac{m-1}{2}$. Thus, her only option to create the edge label $m-1$ is to label $v_1$ with $m-1$.\footnote{When $n=5$, Bob wins the game at this point since no label can be assigned to $v_4$ without generate repeated edge labels.} With the edge labels $m$ and $m-1$ guaranteed, Bob must try to prevent Alice from creating the edge label $m-2$. To achieve this, it is enough to label $v_4$ with any label other than $2$.

\medskip

Next, consider that Alice is the first player. In this case, the proof is divided into seven different cases. We start with the general case and end with the particular cases since we use the general case strategy in some parts of some particular cases. Suppose that Alice labels a vertex $v_j$ with an arbitrary label $i \in \{1,\ldots,m-1\}$ (by Lemma~\ref{lemma:firstlab0}). By the symmetry of $P_n^2$ along its linear sequence $(v_0,v_1,\ldots,v_{n-1})$, we have that starting by labeling $v_j$ is analogous to start by labeling $v_{n-j-1}$. Thus, consider $j\in \{\lfloor n/2 \rfloor,\ldots,n-1\}$. 

\smallskip

\textbf{Case 1:} $P_n^2$ with $n\geq 10$. Suppose that $i=\frac{m+1}{2}$ or $i=1$. Bob must label $v_0$ with $m$, leaving Alice with only two options: to assign 0 to $v_1$ or to assign 0 to $v_2$. If she chooses the first option, then it suffices Bob to label $v_2$ with $\frac{m-1}{2}$. The only option Alice has to create the edge label $m-1$ would be by assigning $m-1$ to $v_3$. However, this move would generate two edges with the same label $|f(v_1)-f(v_2)|=\frac{m-1}{2}=|f(v_2)-f(v_3)|$. Hence, Bob wins since Alice cannot create the edge label $m-1$. In case Alice chooses the second option, Bob must label $v_3$ with $\frac{m-1}{2}$, making it impossible for Alice to assign the color $m-1$ to $v_4$ in order to create the edge label $m-1$. If $i=1$, then Alice cannot use label $1$ to label $v_1$. The game ends with Bob being the winner. If $i=\frac{m+1}{2}$, then Alice's only option is to label $v_1$ with 1, creating not only the edge label $|f(v_0)-f(v_1)|=m-1$ but also $|f(v_1)-f(v_3)|=\frac{m-3}{2}$. Now, the only way to create the edge label $m-2$ would be to assign $m-2$ to $v_4$. However, this move would generate the repeated edge label $\frac{m-3}{2} = |f(v_3)-f(v_4)|$. Hence, Bob is the winner.

Now, suppose that $i\neq \frac{m+1}{2}$ and $i\neq 1$. Thus, Bob labels $v_0$ with $0$, leaving Alice with no other options than to label $v_1$ or $v_2$ with $m$. If she chooses to assign $m$ to $v_1$, Bob simply labels $v_2$ with $\frac{m+1}{2}$. The only option Alice has to create the edge label $m-1$ is to assign $1$ to $v_3$. However, this move induces two repeated edge labels $|f(v_1)-f(v_2)|=\frac{m-1}{2}=|f(v_2)-f(v_3)|$, which is forbidden. In case Alice chooses to assign $m$ to $v_2$, Bob labels $v_3$ with $\frac{m+1}{2}$, making it impossible for Alice to label $v_4$ or $v_1$ with $1$ in order to create the edge label $m-1$. If $i\in \{\frac{m+1}{2}-1, \frac{m+1}{2}+1, m-1\}$, then Alice cannot label $v_1$ with $m-1$ and Bob wins the game. If $i\not\in \{\frac{m+1}{2}-1, \frac{m+1}{2}+1, m-1\}$, then she is forced to label $v_1$ with $m-1$, creating the edge labels $|f(v_0)-f(v_1)|=m-1$ and $|f(v_1)-f(v_3)|=\frac{m-3}{2}$. Now, the only way to create the edge label $m-2$ would be to assign $2$ to $v_4$. However, this move is prohibited since it generates the repeated edge label $\frac{m-3}{2} = |f(v_3)-f(v_4)|$. Hence, Bob wins.

\smallskip

\textbf{Case 2:} $P_4^2$. First, consider $P_4^2$ with $j=2$ and $f(v_j) = i \in \{1,2,3,4\}$. Consider a graceful labeling $f$ of $P_4^2$ represented as a 4-tuple $(a_0,a_1,a_2,a_3)$ where $a_j = f(v_j)$, for $0\leq j\leq 3$. Thus, all graceful labelings of $P_4^2$ with $j=3$ and $i\in \{1,2,3\}$ are: 
$(5,0,1,3)$, $(3,0,1,5)$, $(1,5,2,0)$, $(0,5,2,1)$, $(5,0,3,4)$, $(4,0,3,5)$, $(2, 5, 4, 0)$ and $(0, 5, 4, 2)$. Therefore, for any move made by Alice, Bob labels $v_1$ with $1$ in order to guarantee that a graceful labeling of the graph cannot be obtained. For the case where $j=3$ and $f(v_j) = i \in \{1,2,3,4\}$, all graceful labelings of $P_4^2$ are: 
\begin{align*}
(0, 2, 5, 1)&\quad (2, 0, 5, 1)\quad (2, 5, 0, 1)\quad (3, 0, 5, 1)\quad (3, 5, 0, 1)\quad (5, 0, 2, 1) \\ 
(0, 4, 5, 2)&\quad (0, 5, 4, 2)\quad (1, 0, 5, 2)\quad (1, 5, 0, 2)\quad (4, 0, 5, 2)\quad (4, 5, 0, 2) \\
(1, 0, 5, 3)&\quad (1, 5, 0, 3)\quad (4, 0, 5, 3)\quad (4, 5, 0, 3)\quad (5, 0, 1, 3)\quad (5, 1, 0, 3) \\
(2, 0, 5, 4)&\quad (2, 5, 0, 4)\quad (3, 0, 5, 4)\quad (3, 5, 0, 4)\quad (5, 0, 3, 4)\quad (5, 3, 0, 4).
\end{align*}

Thus, in order to preclude the graph from being gracefully labeled, Bob must: (i) to assign label 4 to any free vertex if $i=1$; (ii) to assign label $3$ to any free vertex if $i=2$; (iii) to assign label $2$ to any free vertex if $i=3$; or (iv) to assign label $1$ to any free vertex if $i=4$.

\smallskip

\textbf{Case 3:} $P_5^2$. \emph{Subcase 3.1:} $j=2$ and $i=1$. Since $v_3$ is adjacent to all other vertices of the graph, Bob simply labels any other vertex with $2$, creating the edge label 1. This move makes it impossible to assign 0 to any other vertex, because it would generate other edge label 1. Since the only way of creating the edge label 7 is by using vertex labels 0 and 7, the actual labeling cannot be extended to a graceful labeling of the graph.

\emph{Subcase 3.2:} $j=2$ and $i=6$. By the same reasoning as the previous subcase, Bob labels an arbitrary free vertex with 5 in order to make it impossible to assign 7 to any other vertex. Thus, Bob wins since there is no way of creating the edge label 7.

\emph{Subcase 3.3:} $j=2$, $i\neq 1$ and $i\neq 6$. Bob labels $v_0$ with 7, forcing Alice to label $v_1$ with 0.
If $i=3$, Bob wins the game since there is no way to generate the edge label $6$ through a valid move. If $i=2$, the only way to create the edge label 6 is by labeling $v_3$ with 6. However, this move is not allowed since it generates two edges labels $4$ ($v_0v_2$ and $v_2v_3$). In this case, Bob is the winner. For $i\in \{4,5\}$, Bob assigns $6$ to $v_4$, making it impossible for Alice to create the edge label 6 (The reader can verify that such assignment is a valid move).

\emph{Subcase 3.4:} $j=3$ and $i\neq 6$. Bob labels $v_0$ with 7, leaving Alice with two choices: assigning $0$ to $v_1$ or to $v_2$. If Alice chooses the first one, then we leave to the reader to verify that, for each $i$, there is at least one $k$, $2\leq k\leq 5$, such that Bob can assign it to $v_2$ without breaking any rules. This way, Alice cannot create the edge label $6$. Hence, Bob wins. If Alice chooses the second choice, then Bob must label $v_4$ with $i+1$ or $i-1$, creating the edge label  $1=|f(v_3)-f(v_4)|$. This way, if Alice labels $v_1$ with $1$ or $6$ on the attempt of creating the edge label $6$, she would also be creating a repeated edge label $1$. Hence, Bob is the winner.

\emph{Subcase 3.5:} $j=3$ and $i=6$. Bob labels $v_0$ with 0, leaving Alice with the only options of labeling $v_1$ or $v_2$ with 7. For the first case, Bob labels $v_4$ with 1 and exhausts Alice's possibilities of creating the edge label 6. For the second case, Bob labels $v_1$ with 2 creating the edge label $5=|f(v_1)-f(v_2)|$. Now, if Alice labels $v_4$ with 1 in order to create the edge label $6=|f(v_2)-f(v_4)|$, she would also be creating $5=|f(v_3)-f(v_4)|$, which is forbidden. Since she cannot create an edge label 6, Bob is the winner.

\emph{Subcase 3.6:} $j=4$ and $i=6$. Bob labels $v_0$ with 0, leaving Alice with the only two options of assigning 7 to $v_1$ or $v_2$. If Alice chooses the first option, it suffices Bob to label $v_3$ with 5, creating the edge label $1=|f(v_3)-f(v_4)|$. This way, Alice cannot use the colors 0 and 6 to create the edge label 6 and, the only way to use the colors 7 and 1 to do so is by labeling $v_2$ with 1. However, this move would generate the repeated edge label $1=|f(v_0)-f(v_2)|$, which is not allowed. On the other hand, if Alice chooses the second option, Bob labels $v_1$ with 5, creating $|f(v_0)-f(v_1)|=5$. Now, the only way Alice could create the edge label 6 would be by assigning 1 to $v_3$, but this move would generate the repeated edge label $5 = |f(v_3)-f(v_4)|$, which is forbidden.

\emph{Subcase 3.7:} $j=4$ and $i=3$. Bob labels $v_0$ with 7, forcing Alice to label either $v_1$ or $v_2$ with 0. In the first case, Bob labels $v_2$ with 4, creating the edge label $3 = |f(v_0)-f(v_2)|$. This way, for Alice to create the edge label 6, she would have to assign 6 to $v_3$. However, this move creates $|f(v_3)-f(v_4)|=3$, which is against the rules. Hence, Bob wins. In the second case, Bob labels $v_3$ with 4, creating $|f(v_3)-f(v_4)|=1$. Now, for Alice to create the edge label 6, she has to label $v_1$ with 1 or 6. However, she cannot label $v_1$ with 1 because it would generate a repeated edge label $3=|f(v_1)-f(v_3)|$ and an edge with this label already exists ($v_2v_4$). This leaves Alice with the only option of labeling $v_1$ with 6, but this move is not allowed since it generates the repeated edge label $|f(v_0)-f(v_1)|=1$. 

\emph{Subcase 3.8:} $j=4$, $i\neq 3$ and $i\neq 6$. Bob labels $v_0$ with 7, forcing Alice to assign 0 to either $v_1$ or $v_2$. If Alice chooses the first one, Bob labels $v_2$ with 3. This way, Alice cannot create the edge label 6 because she would have to label $v_3$ with 6 and this move would generate two edge labels 3 ($v_1v_2$ and $v_2v_3$). Suppose now that Alice chooses the second option. If $i=4$, Bob labels $v_1$ with 2, creating $|f(v_1)-f(v_2)|=2$. This way, Alice cannot label $v_3$ with 6 because it would generate a second edge label 2 ($v_3v_4$). If $i\neq 4$, Bob must label $v_1$ with 3, making it impossible for Alice to label $v_3$ with 6.

\smallskip 

\textbf{Case 4:} $P_6^2$. \emph{Subcase 4.1:} $j=3$ and $i=1$ (resp.~$i=8$). Bob labels $v_0$ with $9$ (resp.~$0$), forcing Alice to label either $v_1$ or $v_2$ with $0$ (resp.~$m$). In any of these two cases, Bob labels $v_5$ with $8$ (resp.~$1$) guaranteeing that Alice cannot create the edge label $8$.

\emph{Subcase 4.2:} $j=3$, $i\neq 1$ and $i\neq 8$. Bob labels $v_0$ with 0. Thus, Alice is forced to label $v_1$ (resp.~$v_2$) with $9$. It is not difficult to verify that, for each value of $i$, there is at least one label $k$, $2\leq k \leq 7$, that Bob can assign to $v_2$ (resp. $v_1$) without breaking any rules and this move prevents Alice from creating the edge label $8$.

\emph{Subcase 4.3:} $j=4$ and $i\in \{1,8\}$. If $i=1$ (resp. $i=8$), then Bob labels $v_0$ with $9$ (resp. 0). This way, Alice is forced to label $v_1$ or $v_2$ with 0 (resp. $9$). Then, it suffices Bob to label $v_3$ with $4$ (resp. $5$). Therefore, Alice cannot label either $v_1$ or $v_2$ with 1 (resp. $8$), because this color is already assigned to $v_5$. She also cannot label neither of these two vertices with $8$ (resp. 1), because it would generate two edge labels $4 = |f(v_1)-f(v_3)| = |f(v_2)-f(v_3)|$. Hence, Bob is the winner. 

\emph{Subcase 4.4:} $j=4$, $i\neq 1$ and $i \neq 8$. Bob labels $v_0$ with 0. Alice is thus forced to label $v_1$ or $v_2$ with $9$. Independently of her choice, Bob labels $v_5$ with $i-1$, thus generating edge label $1 = |f(v_4)-f(v_5)|$. From now on, any Alice's attempt to obtain edge label $m-1$ generates a repeated edge label $1$. Therefore, Bob wins. 

\emph{Subcase 4.5:} $j=5$. This case applies to Case 1.

\smallskip

\textbf{Case 5:} $P_7^2$. For $j=3$ and $j=4$, the development of the cases of $P_7^2$ are analogous to the cases of $P_6^2$ (for their respective $j$'s). The cases $j=5$ and $j=6$ are analogous to Case 1.

\smallskip 

\textbf{Case 6:} $P_8^2$. For $j=4$, the development of the cases of $P_8^2$ are analogous to the cases of $P_6^2$. The cases $j=5$, $j=6$ and $j=7$ apply to Case 1.

\smallskip 

\textbf{Case 7:} $P_9^2$. For $j=4$, the development of the cases of $P_9^2$ are analogous to the cases of $P_6^2$. The cases $j=5$, $j=6$, $j=7$ and $j=8$ apply to Case 1.
\end{proof}

\section{Concluding Remarks}\label{sec:conclusion}

In this work, we investigate the graceful game introduced by Tuza \cite{Tuza2017} for many classic families of graphs in order to contribute to the two famous graceful graph conjectures posed by Rosa \cite{Rosa1967,Rosa1988}.
As we summarize in Table~\ref{tabela}, Alice has winning strategies for only few cases such as: complete graphs $K_i$, $i\leq 3$, and stars $K_{1, q}$, $q\geq 2$ when she is the first player.

\begin{table}[ht]
\begin{center}
\begin{tabular}{ccc}
\hline
\multicolumn{1}{c}{\multirow{2}{*}{Graph class}} & \multicolumn{2}{c}{First player}  \\ \cline{2-3}
\multicolumn{1}{c}{}
                                & Alice & Bob \\ \hline \hline
$P_n, n={1,2}$                  & A     & A   \\ \hline
$P_3$                           & A     & B   \\ \hline
$P_n, n\geq 4$                  & B     & B   \\ \hline
$K_3$                           & A     & A   \\ \hline
$K_4$                           & B     & B   \\ \hline
$C_n, n\geq 4$                  & B     & B   \\ \hline
$K_{1,q}, q\geq 2$              & A     & B   \\ \hline
$K_{p,q}, p,q\geq 2$            & B     & B   \\ \hline
$cat(k_1,\ldots ,k_s), s\geq 2$ & B     & B   \\ \hline
$W_n, n={3,4,5}$                & B     & B   \\ \hline
$W_n, n\geq 6$                  & ?     & B   \\ \hline
$H_n, n\geq 3$                  & B     & B   \\ \hline
$W(t,n), t\geq 2, n\geq 3$      & B     & B   \\ \hline
$G_n, n\geq 3$                  & B     & B   \\ \hline
$Q_n, n\geq 2$                  & B     & B   \\ \hline
$P_n^2, n\geq 4$                & B     & B   \\ \hline
\end{tabular}
\end{center}
\caption{\label{tabela} Graph classes and winners: A (Alice) and B (Bob).}
\end{table}

\section{Acknowledgements}
This work was partially supported by CNPq, FAPERJ, Coordena\c{c}\~ao de Aperfei\c{c}oamento de Pessoal de N\'ivel Superior - Brasil (CAPES) - Finance Code 001 and CAPES-PrInt project number $88881.310248$/$2018$-$01$.

\end{document}